\title{Hybridized Isogeometric Method for Elliptic Problems on CAD Surfaces with Gaps} 
\author{Tobias Jonsson \hspace{5mm} Mats G. Larson \hspace{5mm} Karl Larsson}
\date{\today}
\begin{document}

\maketitle

\begin{abstract}
We develop a method for solving elliptic partial differential equations on surfaces described by CAD patches that may have gaps/overlaps. The method is based on hybridization using a three-dimensional mesh that covers the gap/overlap between patches. Thus, the hybrid variable is defined on a three-dimensional mesh, and we need to add appropriate normal stabilization to obtain an accurate solution,  which we show can be done by adding a suitable term to the weak form. In practical applications, the hybrid mesh may be conveniently constructed using an octree to efficiently compute the necessary geometric information. We prove error estimates and present several numerical examples illustrating the application of the method to different problems, including a realistic CAD model. 
\end{abstract}

\section{Introduction}
CAD models describe surfaces using a collection of patches that meet in curves and points.  Ideally, the CAD surface is watertight, but in practice, there are often gaps or overlaps between neighboring patches. These gaps/overlaps may cause serious meshing and finite element analysis problems and in practical applications the CAD model often needs to be corrected before meshing is possible. This paper develops a robust isogeometric method \cite{IGABook} for handling CAD surfaces with gaps/overlaps.  The main idea is to 
cover the gaps/overlaps with a three-dimensional mesh and then use a hybrid variable on this mesh together with a Nitsche-type formulation.  
The hybrid variable transfers data between neighboring patches, and there is no direct communication between the patches.   To 
obtain a convergent method, the hybrid variable must be given enough stiffness in the directions normal to the interface.  We show that this 
can be done by adding a suitable term to the weak statement.  
We allow trimmed patches and add appropriate stabilization terms to control the behavior of the finite element functions in the vicinity of the 
trimmed boundaries using techniques from CutFEM,  see \cite{BurCla15}.  In practice,  we suggest an octree structure for setting up the hybrid mesh to facilitate efficient computation of the involved terms. We allow standard conforming finite element spaces as well as spline 
spaces with higher regularity.  We derive error estimates and present several numerical examples illustrating the method's convergence and application to a realistic CAD model.

\paragraph{Related Work.}
A framework that is also based on a patchwise parametrically described geometry combined with a Nitsche type method to couple the solution over patch interfaces is the discontinuous Galerkin isogeometric analysis \cite{MR3630844,MR3643563}, which considers gaps/overlaps in \cite{MR3566910, MR3547686}. One major difference to the present work is that the method involves the explicit construction of a parametric map between corresponding points over interfaces with gaps, which in our method is implicit through the stabilization of the hybrid variable. In our view the hybridized approach leads to a considerably more convenient and robust implementation that also has the benefit of supporting interfaces coupling more than two patches, cf. \cite{HanJonLar17}.
Our usage of the hybrid variable resembles the bending strip method for Kirchhoff plates \cite{MR2672111}, in which strips of fictitious material with unidirectional bending stiffness and zero membrane stiffness are placed to cover the gaps and are used for coupling the solution over the patch interfaces.
The coupling of solutions over imperfect interfaces is also addressed in overlapping mesh problems where the solution is defined on two separate meshes whose boundaries do not match, but rather intersect each other's meshes. This was extended to gaps in \cite{MR2431595,MR2344045} where elements close to the interface were modified to cover the gap, eliminating the gap regions and creating an overlapping mesh situation instead. However, it is not clear how overlapping mesh techniques could be utilized to couple solutions on surfaces since the patch meshes do not necessarily lie on the same smooth surface.

\paragraph{Outline.}

The paper is organized as follows: In Section 2 we present the method,  in Section 3 we show stability and error estimates, and in Section 4 
we present numerical experiments and examples.

\section{Model Problem and Method}

The main contribution of this paper is the robust coupling of solutions over patch interfaces with gaps/overlaps. To simplify the derivation and analysis of the method, we consider a simplified model problem that allows us to focus on the central issue and avoid complicated notation and unrelated technical arguments. We include remarks and references on how the method is extended to more general problems on CAD surfaces.

\subsection{Model Problem}
We introduce a two-dimensional model problem with a gap at an internal interface,  derive a hybridized formulation 
and the corresponding finite element method,  together with the necessary notation to proceed with the analysis.

\paragraph{Model for a Domain with Gap.} We introduce the following set-up and notation, illustrated in Figure~\ref{fig:model-domain}:
\begin{itemize}
\item Consider a domain $\Omega\subset \IR^2$ and let $\Omega_1$ and $\Omega_2$ be a partition of $\Omega$ into two subsets separated by a smooth interface $\Gamma$,  such that $\Omega_1$ is the exterior domain and $\Omega_2$ is the interior domain.  Let $U_\delta(\Gamma)\subset\IR^3$ be the open three-dimensional tubular neighborhood of $\Gamma$ with thickness $2 \delta$.  Then there is $\delta_0>0$ such that the closest point mapping $p_\Gamma:U_{\delta_0} (\Gamma) \rightarrow \Gamma$ is well defined.  %We also assume that $U_{\delta_0}(\Gamma) \subset \Omega$.

\item Let $\Omega_{i,\delta}$ be obtained by perturbing $\Gamma$ in the normal 
direction by a function $\gamma_i \in C(\Gamma)$ such that 
\begin{equation}\label{eq:gapbound}
\| \gamma_i \|_{L^\infty(\Gamma)} \lesssim \delta \leq \delta_0
\end{equation}
 More precisely 
\begin{align}
\partial \Omega_{i,\delta} = \bigcup_{x \in \Gamma}  x + \gamma_i(x)  n_\Gamma(x)
\end{align}
where $n_\Gamma(x)$ is the unit normal to $\Gamma$ exterior to $\Omega_2$.  Note that the 
functions $\gamma_1$ and $\gamma_2$ are different and therefore the domains 
$\Omega_{1,\delta}$ and $\Omega_{2,\delta}$ do not perfectly match at the interface,  instead there 
may be a gap or an overlap but in view of (\ref{eq:gapbound}) we will have 
\begin{align}
\partial \Omega_{1,\delta} \cup \partial \Omega_{2,\delta} \subset U_{\delta}(\Gamma) 
\subset U_{\delta_0}(\Gamma)
\end{align}
\end{itemize}

\begin{figure}
\centering
\begin{subfigure}[t]{0.27\linewidth}\centering
\includegraphics[width=0.9\linewidth]{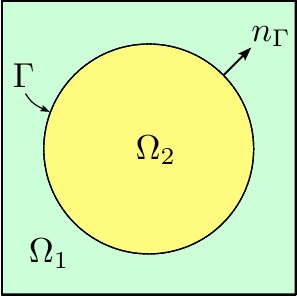}
\subcaption{Two patch domain}
\label{fig:model-a}
\end{subfigure}
\begin{subfigure}[t]{0.27\linewidth}\centering
\includegraphics[width=0.9\linewidth]{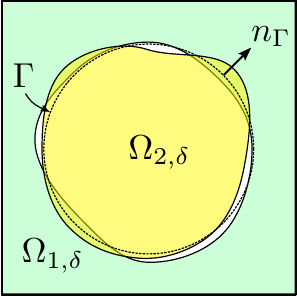}
\subcaption{Perturbed patches}
\label{fig:model-b}
\end{subfigure}
\begin{subfigure}[t]{0.27\linewidth}\centering
\includegraphics[width=0.9\linewidth]{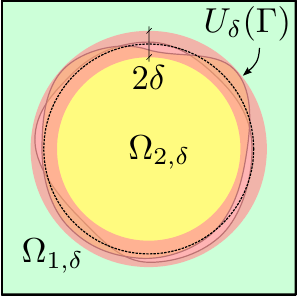}
\subcaption{Tubular neighborhood}
\label{fig:model-c}
\end{subfigure}
\\[1.0ex]	
%\begin{subfigure}[t]{0.47\linewidth}\centering
%\includegraphics[width=0.9\linewidth]{model-problem-3d}
%\subcaption{Two-dimensional model problem}
%\label{fig:model-ax}
%\end{subfigure}
\begin{subfigure}[t]{0.7\linewidth}\centering
\includegraphics[width=0.8\linewidth]{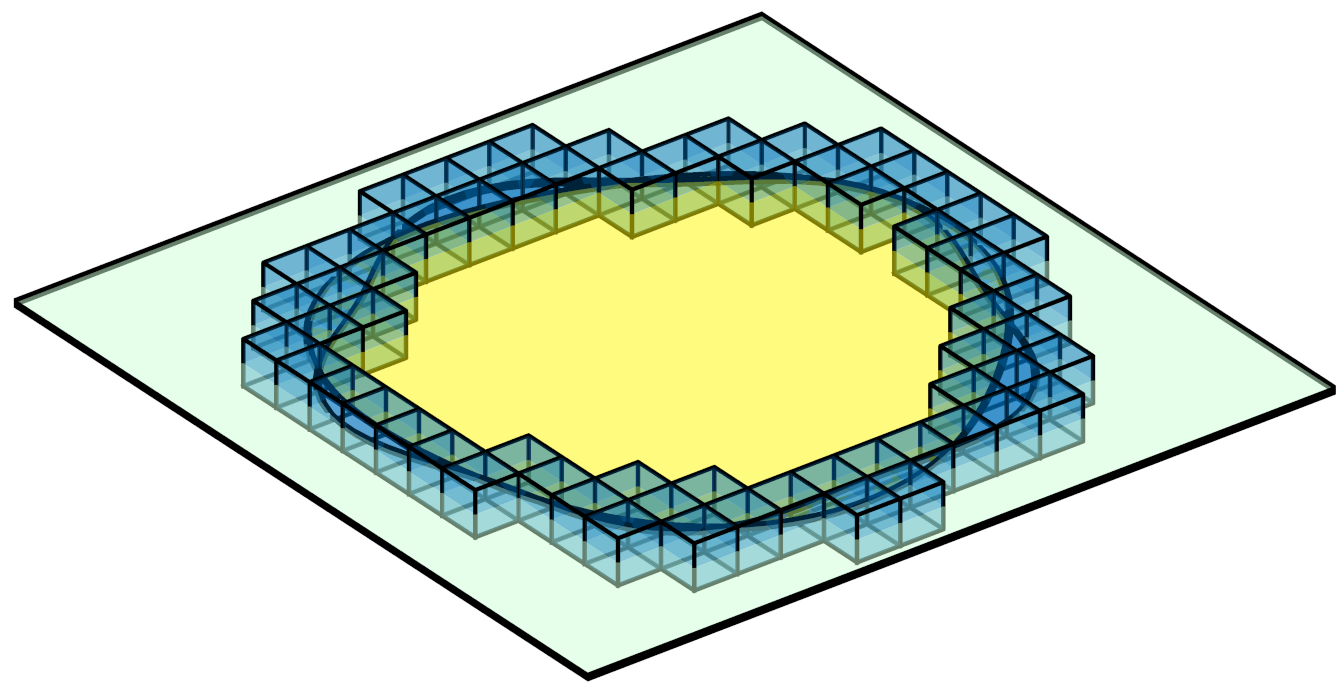}
\subcaption{Three-dimensional hybrid mesh covering the interface}
\label{fig:model-bx}
\end{subfigure}
\caption{\emph{Model problem with gap/overlap.} \emph{Top:} In the derivation and analysis of the method we use this conceptual construction of a two-dimensional two-patch domain with gaps/overlaps stemming from perturbation of the patch boundaries facing the interface.
\emph{Bottom:} While the perturbed two-patch domain entirely lives in the two-dimensional plane, the hybrid variable for increased generality will live on a three-dimensional mesh covering the imperfect interface.}
\label{fig:model-domain}
\end{figure}

\paragraph{Exact Model Problem.}
Consider the following model interface problem on the exact partition of $\Omega$ (without a gap/overlap): Find $u$ fulfilling
\begin{align} \label{eq:model-problem}
-\Delta u_i = f_i \qquad \text{in $\Omega_i$},\quad i=1,2
\end{align}
with interface conditions
\begin{align} \label{eq:model-problem-interface}
u_1 = u_2, \qquad \nabla_{n_1} u_1 + \nabla_{n_2} u_2 = 0 \qquad \text{on $\Gamma$} 
\end{align}
and a homogeneous Dirichlet boundary condition $u=0$ on $\partial\Omega$.
Here $u_i$ indicates the solution on the patch $\Omega_i$, and we let $u_0$ denote the solution on the interface $\Gamma$. We assume a regularity of the weak solution on each patch $u_i \in H^{s}(\Omega_i) \cap H^1_0(\Omega)|_{\Omega_i}$, where $s>3/2$. Further, for the solution on the interface we assume $u_0 \in H^{s}(\Gamma)$, which is likely $1/2$ more regularity than is required since this is essentially the trace along $\Gamma$ but we maintain this assumption for simplicity. In summary, we assume a weak solution with the following decomposition into three fields
\begin{align}
u = (u_0; u_1; u_2) \in W &= V_0 \otimes V_1 \otimes V_2
%\\&
= \bigl( H^s(\Gamma) \otimes H^s(\Omega_1) \otimes H^s(\Omega_2) \bigr) \cap H^1_0(\Omega)
\end{align}

\paragraph{Extended Solution.}
We will next derive a weak formulation on the perturbed patches $\Omega_{i,\delta}$ instead of on the exact patches $\Omega_i$. To make sense of the exact solution $u$ in such a formulation we must first extend $u$ to the perturbed domains. We recall that there is an extension operator $E_i : H^s(\Omega_{i}) \rightarrow H^s(\IR^2)$, independent of $s$,
such that
\begin{align}
\| E_i v \|_{H^s(\IR^2)} \lesssim \| v \|_{H^s(\Omega_{i})} 
\end{align} 
and $E_i v = v$ on $\Omega_i$, see \cite{stein70}. For the derivation of the hybridized formulation we introduce fields $u_0,v_0$ defined on a domain $\Omega_0 \subset \IR^3$ fulfilling 
\begin{equation} \label{eq:extension-patch}
\partial \Omega_{1,\delta} \cup \partial \Omega_{2,\delta} \cup \Gamma 
\subset \Omega_0 \subset U_{\delta_0}(\Gamma)
\end{equation}
and hence we must also extend the exact solution $u$ on $\Gamma$ to $\Omega_0$. To this end we define an extension $E_0: H^s(\Gamma) \rightarrow H^s(U_{\delta_0}(\Gamma))$ 
such that $(E_0 v)|_x = v\circ p_\Gamma (x)$. Clearly, $E_0 v = v$ on $\Gamma$.  We then have
\begin{align} \label{eq:extension-interface}
\| E_0 v \|_{H^s(U_{\delta_0}(\Gamma))} \lesssim \delta_0 \| v \|_{H^s(\Gamma)}
\end{align}
see \cite{BurHanLarMas18}.
For compactness we introduce the notation
\begin{align}
u^e = (u_0^e; u_1^e; u_2^e) = (E_0 u_0; E_1 u_1; E_2 u_2)
\end{align}
where it is implied by the subscript of the field which extension operator is used.
We also apply this notation to spaces such that, for instance, $W^e = \{ v = w^e \,:\, w \in W \}$.

\paragraph{Hybridized Weak Formulation.}
Since an extended function coincides with the original function on its original domain, we may replace the fields in the continuous problem \eqref{eq:model-problem}--\eqref{eq:model-problem-interface} by their extensions.
We then, patchwise, multiply \eqref{eq:model-problem} by a test function $v_i^e \in V_i^e$, integrate over the perturbed patch $\Omega_{i,\delta}$, and apply a Green's formula to obtain
\begin{align}
\sum_{i=1}^2 (f_i^e,v_i^e)_{\Omega_{i,\delta}} &= \sum_{i=1}^2 (-\Delta u_i^e, v_i^e)_{\Omega_{i,\delta}} 
\\
&= \sum_{i=1}^2 (\nabla u_i^e, \nabla v_i^e)_{\Omega_{i,\delta}} 
- (\nabla_n u^e, v_i^e)_{\partial \Omega_{i,\delta}} 
\\
&= \sum_{i=1}^2 (\nabla u_i^e, \nabla v_i^e)_{\Omega_{i,\delta}} - (\nabla_n u_i^e, v_i^e - v_0^e)_{\partial \Omega_{i,\delta}} -  (\nabla_n u_i^e,  v_0^e)_{\partial \Omega_{i,\delta}}
\\
&\approx \sum_{i=1}^2 (\nabla u_i^e, \nabla v_i^e)_{\Omega_{i,\delta}} - (\nabla_n u_i^e, v_i^e - v_0^e)_{\partial \Omega_{i,\delta}}
- (u_i^e - u_0^e, \nabla_n v_i^e)_{\partial \Omega_{i,\delta}} 
\\
&\qquad\quad 
+ \beta h^{-1} ( u_i^e - u_0^e, v_i^e - v_0^e)_{\partial \Omega_{\delta,i}}
- (\nabla_n u_i^e,  v_0^e)_{\partial \Omega_{i,\delta}}
\end{align}
where we added and subtracted functions $u_0^e = u_0 \circ p_\Gamma = u|_\Gamma \circ p_\Gamma$ and $v_0^e$, and in the 
last step we added terms involving $u^e - u_0^e$ that are not exactly zero since they are 
evaluated on the perturbed curves $\partial \Omega_{i,\delta}$, which differ from $\Gamma$.  
The functions $u_0^e$ and $v_0^e$ will, due to the construction of $E_0$ using the closest point mapping $p_\Gamma$, in the continuous problem be constant in the directions orthogonal to $\Gamma$. In the discrete setting, this property will instead be imposed weakly since it is not straightforward to implement strongly.

\paragraph{Application to Surfaces.} The model problem can be directly extended to a setting with a surface built up by a set of patches,  
$\mcO = \{ \Omega_i : i \in I\}$ with $I$ an index set,  and interfaces $\{\Gamma_{ij} = \partial \Omega_i \cap \partial \Omega_j\}$.  The patches 
are defined by a mapping $F_i : \IR^2 \supset \widehat{\Omega}_i \rightarrow \Omega_i \subset \IR^3$,  and a set of trim curves 
$\widehat{\Gamma}_{ij}$.  In the model problem \eqref{eq:model-problem} the Laplace operator is replaced by the  Laplace-Beltrami operator $\Delta_\Omega$,  the gradients are replaced by tangential gradients $\nabla_\Omega$,  and 
the interface conditions are
\begin{align} \label{eq:surf-interface-cond}
u_i = u_j, \qquad \nabla_{\nu_i} u_i + \nabla_{\nu_j} u_j = 0 \qquad \text{on $\Gamma_{ij}$} 
\end{align}
where  and $ \nabla_{\nu_i} = \nu_i \cdot \nabla_\Omega$ are the tangential derivatives along the exterior unit co-normals $\nu_i$ to $\partial \Omega_{i,\delta}$. Note that here $\nu_i$ may be different from $-\nu_j$ and 
thus $\Gamma_{ij}$ may be a sharp edge on the surface across which the surface normal is discontinuous.  The perturbation of the surface may 
be precisely defined by first extending $\Omega_i$ to a slightly larger smooth surface $\widetilde{\Omega}_i$ and then assuming that 
$\partial \Omega_{i,\delta}$ is smooth curve on $\widetilde{\Omega}_i$ such that 
\begin{align}\label{eq:pert-int}
\partial \Omega_{i,\delta} \subset U_\delta(\Gamma)
\end{align}
The surface patches can be further perturbed by the action of a rigid body motion in $\IR^3$ with norm less than $\delta$.   The analysis we 
present is basically directly applicable to this setting since the key assumption is  (\ref{eq:pert-int}). A further difficulty that we do not consider 
here is a more general perturbation of the mapping $F$.  We have chosen to present the method 
and analysis in the simple setting outlined in the previous paragraph since it captures the main new challenges and the notation is much simpler.

\paragraph{Implementation.} In practice we first import a number of patches that do not match perfectly.  These patches $\{ \Omega_i : i \in I \}$ 
are each described by the mapping $F_i$ together with a set of trim curves $\{ \gamma_j : j \in J_I \}$ defining the boundary of the patch in 
the reference domains.  We then compute the intersection with the mapped trim curves $F(\gamma_i)$ and voxels in an octree which allows local refinement.  We can then extract a suitable cover of the gaps between the mapped patches consisting of a face-connected set of voxels which is the mesh used for the hybrid variable.  The precise formulation of such algorithms is not the focus of this paper and we leave that for future work. Note, in particular, that no information is passed directly between two patches instead all information is passed through the hybrid variable.
%Finally,  
%we refer to \cite{JonLarLar17,MR4021278} for further details on finite element formulations on multipatch CAD surfaces.\todo{fix last sentence}
 
\subsection{Hybridized Finite Element Method}
\paragraph{Finite Element Spaces.}
To define the finite element spaces we assume that we have polygonal domains $\Omega_i \subset \widetilde{\Omega}_{i} \subset \IR^2$ 
and families of quasiuniform meshes  $\widetilde{\mcT}_{h,i} $ on $\widetilde{\Omega}_i$ with mesh parameter $h \in (0,h_0]$,  for $i=1,2.$ 
We define the active meshes and the corresponding discrete domains by
\begin{equation}
\mcT_{h,i} = \{ T \in \widetilde{\mcT}_{h,i} : T \cap \Omega_i \neq \emptyset \},  \qquad \Omega_{h,i} = \cup_{T \in \mcT_{h,i} }T ,  \qquad i =1,2
\end{equation}  
For the hybrid mesh we instead consider a polygonal domain $  \Gamma \subset U_{\delta_0}(\Gamma) \subset \widetilde{\Omega}_0 \subset \IR^3$ 
and a family of quasiuniform meshes $\widetilde{\mcT}_{h,0}$ on $\widetilde{\Omega}_{h,0}$ with mesh parameter $h \in (0,h_0]$. Then we define the active mesh by 
\begin{equation}
\mcT_{h,0} = \{ T \in \widetilde{\mcT}_{h,0} : T \cap U_\delta(\Gamma) \neq \emptyset \}, \qquad \Omega_{h,0} = \cup_{T \in \mcT_{h,0}} T 
\end{equation}  
Next we let $\widetilde{V}_{h,i}$ be a conforming finite element or spline space on $\widetilde{\mcT}_{h,i}$ and we define the active finite 
element spaces by restriction to the active mesh
\begin{equation}
V_h = \widetilde{V}_h |_{\Omega_{h,i},} \qquad i = 0,1,2
\end{equation}
Finally,  the finite element space is the direct sum of our three spaces
\begin{align}
W_h = V_{h,0} \oplus V_{h,1} \oplus V_{h,2}
\end{align}
Here we emphasize that the space $V_{h,0}$ is defined on the three-dimensional mesh $\mcT_{h,0}$ and the spaces $V_{h,i}$ are defined on the two dimensional meshes $\mcT_{h,i}$,  $i=1,2.$

\paragraph{Definition of the Method.}
Based on the derivation we define the method: find $u_{h} \in W_h$ such that  
\begin{equation} \label{eq:method}
A_h(u_h,v) = l_h(v)\qquad \forall v \in W_h
\end{equation}
where 
\begin{align}
A_h(v,w) &= s_{h,0}(v_0,w_0) + \sum_{i=1}^2 a_{h,i}(v_0,v_i;w_0,w_i) + s_{h,i}(v_i,w_i)
\\
l_h(v) &=  \sum_{i=1}^2 (f_i^e, v_i)_{\Omega_{i,\delta}}
\\
\shortintertext{and we have the hybrid variable stabilization}
\label{eq:blockstab}
s_{h,0}(v_0,w_0) &= \tau_0
h^{-\alpha}
\Bigl(
( \nabla_\Gamma^\perp v_0,\nabla_\Gamma^\perp w_0)_{\mcT_{h,0}} 
+
\sum_{l=1}^p h^{2l + 1} \bigl( \llbracket \nabla_n^l v_0 \rrbracket , \llbracket \nabla_n^l w_0 \rrbracket \bigr)_{\mcF_{h,0}}
\Big)
\end{align}
where $\alpha$ is a parameter, $\nabla_\Gamma^\perp$ is the component  of $\nabla_{\IR^3}$ normal to $\Gamma$, and $\llbracket \nabla_n^l w \rrbracket$ denotes the jump over a face in the $l$:th directional derivative of $w$ in the direction of the face normal. The forthcoming analysis shows that $\alpha = 2$ is a suitable choice. The remaining forms are defined by
\begin{align} \label{eq:ahi}
a_{h,i}(v_0, v_i; w_0, w_i) &= (\nabla v_i , \nabla w_i)_{\Omega_{i,\delta}} 
%\\ \nonumber&\qquad 
- (\nabla_n v_i, w_i - w_0)_{\partial \Omega_{i,\delta}} 
- (v_i - v_0, \nabla_n w_i)_{\partial \Omega_{i,\delta}}
\\ \nonumber
&\qquad 
+ \beta h^{-1} ( v_i - v_0, w_i - w_0)_{\partial \Omega_{\delta,i}}
\\ \label{eq:ghost-penalty}
s_{h,i}(v_i,w_i) &= \tau_i \sum_{l=1}^p h^{2l-1} \bigl( \llbracket \nabla_n^l v_i \rrbracket , \llbracket \nabla_n^l w_i \rrbracket \bigr)_{\mcF_{h,i}}
\end{align}
where $\tau_0,\tau_1,\tau_2$ and $\beta$ are positive parameters.
For simplicity,  we do not consider the implementation of the Dirichlet boundary condition on the 
exterior boundary $\partial \Omega$. We could either assume that we have a matching mesh at $\partial \Omega$ and use strong boundary conditions 
or use a weak Nitsche-type method.

\begin{rem}[Hybrid Variable Stabilization] \label{rem:hybrid-stab}
The first term in the stabilization \eqref{eq:blockstab} of the hybrid variable is the most important and provides the necessary control of the 
variation of the hybrid variable across the gap,  see estimate \eqref{eq:v0-pert} below.  The second term is added to increase 
robustness and the well-conditioning of the algebraic equations.
In the first term we must be able to evaluate the gradient $\nabla_\Gamma^\perp = (I - t_\Gamma \otimes t_\Gamma)\nabla_{\IR^3}$, where $t_\Gamma$ is the tangent to $\Gamma$, extended to the complete hybrid variable domain $\Omega_{h,0}$. One option is to extend $t_\Gamma$ to $\mcT_{h,0}$ using the closest point mapping $p_\Gamma(x)$. While $\Gamma$ in the description above is the location of the exact interface, this in most practical situations is unknown. However, since $\Gamma$ is just a theoretical construction we instead define the position of $\Gamma$ based on the perturbed interfaces, for instance as the midpoint between the closest point on $\partial\Omega_{1,\delta}$ respectively on $\partial\Omega_{2,\delta}$.
A more elaborate option would be to introduce a discrete field variable for $t_\Gamma$ on $\Omega_{h,0}$ that is determined via projection of the tangent vectors of $\partial\Omega_{i,\delta}$. Such an approach would have the benefits of not relying on identifying closest points and facilitating higher-order approximations of how information flows over the gap.
For suitable stabilization when there is no gap/overlap, see \cite{MR4021278}, where a similar patch coupling with a hybridized approach is considered.
\end{rem} 

\begin{rem}[Patch Stabilization] \label{rem:sh_i}
On each patch we include \eqref{eq:ghost-penalty}, which is a so-called ghost penalty stabilization term \cite{Bur10}. The inclusion of this stabilization allows us to use cut finite element methods \cite{BurCla15,JonLarLar17} for discretizing the solution on each patch. Essentially, the mesh on each patch is not required to conform to the patch geometry --- it is sufficient that the mesh covers the geometry ---
and still, the method enjoys the same approximation and stability properties as a standard FEM.
Alternative stabilization approaches include finite cell stabilization \cite{MR4394710} and discrete extension \cite{MR4502991}.
In a cut setting, it is natural to use a weak Nitsche-type method for implementing the Dirichlet boundary condition.
\end{rem}

\begin{rem}[Extension to Isogeometry] \label{rem:extension2isogeometry}
In the surface CAD description, each surface patch $\Omega_{i,\delta} \subset \IR^3$, is described using a parametric map $F_i:\widehat \Omega_{i,\delta} \to \Omega_{i,\delta}$ from a two-dimensional reference domain $\widehat \Omega_{i,\delta} \subset [0,1]^2$. Following the procedure outlined above for extension to surfaces, we then patchwise transform the problem back to $\widehat \Omega_{i,\delta}$ before discretizing. For instance, this means that the form corresponding to \eqref{eq:ahi} will take the structure
\begin{align}
\label{eq:ahi-surf}
a_{h,i}(v_0,v_i;w_0, w_i) &= (|G_i|^{1/2} G_i^{-1}\nabla v_i , \nabla w_i)_{\widehat\Omega_{i,\delta}} 
\\ \nonumber &\qquad 
- (n \cdot (|G_i|^{1/2} G_i^{-1 }\nabla v_i, w_i - w_0\circ F_i)_{\widehat\partial \Omega_{i,\delta}}
\\ \nonumber &\qquad 
- (v_i - v_0\circ F_i, n\cdot(|G_i|^{1/2} G_i^{-1 }\nabla w_i))_{\widehat\partial \Omega_{i,\delta}}
\\ \nonumber &\qquad 
+ \beta h^{-1} ( |G_i|^{1/2} n\cdot G_i^{-1 } \cdot n (v_i - v_0\circ F_i), w_i - w_0\circ F_i)_{\widehat\partial \Omega_{\delta,i}}
\end{align}
where $G_i$ is the metric tensor implied by the map $F_i$. Note that the patch mesh in this case is directly defined on the two-dimensional reference domain, and so is the patch stabilization. For more details on this topic, we refer to our work in \cite{JonLarLar17}.
\end{rem}

\section{Error Estimates}

In this section, we derive an error estimate for the method applied to the model problem. To keep the complexity of the paper at a minimal level we consider the most fundamental stability and energy estimates in a situation with planar patches and a three-dimensional hybrid variable. This model problem simplifies the notation significantly and captures the essential difficulties in the analysis. The extension to curved patches that meet in a sharp edge is direct using the techniques developed in \cite{JonLarLar17} and 
\cite{HanJonLar17}. We discuss the details of these extensions in Remark \ref{rem:iga-ext} at the end of this section.

\paragraph{Norms,  Stabilization,  and Poincar\'e Inequality.}
 Define the energy norm
\begin{equation} \label{eq:energy-norm}
\tn v \tn_h^2 = \| v_0 \|^2_{s_{h,0}} + \sum_{i=1}^2 \| \nabla v_i \|^2_{\Omega_{i,\delta}} 
+ \| v_i \|^2_{s_{h,i}}
+ h \| \nabla v_i \|^2_{\partial \Omega_{i,\delta}} 
+ h^{-1} \| v_i - v_0\|^2_{\partial \Omega_{i,\delta}} 
\end{equation}
where $\| w \|^2_{s_{h,i}} = s_{h,i}(w,w)$, $i=0,1,2$, and $\| w \|_\omega^2 = \int_\omega w^2$ is the usual $L^2(\omega)$ norm.

The stabilization forms provide the control 
\begin{align}
\label{eq:sh-inverseL2}
\| \nabla^m v_i \|^2_{\Omega_{h,i}} &\lesssim \| \nabla^m v_i \|^2_{\Omega_{i,\delta}} + \| v_i \|^2_{s_{h,i}}, \qquad i=1,2, \quad m=0,1
\\
\label{eq:sh0-inverseL2}
h^{-2} \| v_0 \|^2_{\Omega_{h,0}} 
&\lesssim 
\| v_0 \|^2_{\partial\Omega_{i,\delta}} + \| v_0 \|^2_{s_{h,0}} 
\end{align}
see \cite{BurHanLarMas18, HanLarLar17, LarZah20} for proofs. We also have the following result that quantifies the control provided by the stabilization of the hybrid variable.
\begin{lem}[Hybrid Variable Control] \label{lem:hybrid-variable-control}
For $v_0 \in V_{h,0}$ and $i=1,2,$ there are bounds
\begin{alignat}{2}
\label{eq:v0-pert}
\| v_0 - v_0^e \|_{\partial\Omega_{i,\delta}}^2
&\lesssim \delta^2 h^{\alpha-2} \| v_0 \|_{s_{h,0}}^2
\\
\label{eq:v0-control}
\| v_0 \|_{\partial\Omega_{i,\delta}}^2
&\lesssim
\delta^2 h^{\alpha-2} \| v_0 \|_{s_{h,0}}^2 + \tn v \tn_h^2
\end{alignat}
where $v_0^e(x) = v_{0}\circ p_\Gamma(x)$. Assuming $\alpha\geq 2$, these bounds may be simplified since then
\begin{align} h^{\alpha-2} \| v_0 \|_{s_{h,0}}^2 \leq \tn v \tn_h^2
\end{align}
\end{lem}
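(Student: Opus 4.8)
The key estimate to target is the first bound \eqref{eq:v0-pert}: $\|v_0 - v_0^e\|_{\partial\Omega_{i,\delta}}^2 \lesssim \delta^2 h^{\alpha-2}\|v_0\|_{s_{h,0}}^2$, where $v_0^e(x) = v_0 \circ p_\Gamma(x)$. The intuition: $v_0 - v_0^e$ measures how much $v_0$ fails to be constant along the fibers of the closest-point projection $p_\Gamma$, i.e. in the directions normal to $\Gamma$. The first term of the stabilization $s_{h,0}$ controls exactly this — $\|\nabla_\Gamma^\perp v_0\|_{\mathcal{T}_{h,0}}^2$, weighted by $\tau_0 h^{-\alpha}$. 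So morally this should be a Poincaré-type inequality along the normal fibers.

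I would proceed as follows. Fix $x \in \partial\Omega_{i,\delta}$ and let $y = p_\Gamma(x) \in \Gamma$. Then $x$ and $y$ both lie in $U_{\delta_0}(\Gamma)$, and $|x - y| = \operatorname{dist}(x,\Gamma) \lesssim \delta$ by \eqref{eq:gapbound}/\eqref{eq:pert-int}. Writing $v_0(x) - v_0^e(x) = v_0(x) - v_0(y)$, I integrate $\nabla_\Gamma^\perp v_0$ along the segment (or the normal fiber) from $y$ to $x$ — this segment is contained in $U_\delta(\Gamma) \subset \Omega_{h,0}$, so the integrand is defined on the hybrid mesh. A Cauchy–Schwarz step gives $|v_0(x) - v_0(y)|^2 \lesssim \delta \int_{\text{fiber}} |\nabla_\Gamma^\perp v_0|^2$. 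One subtlety: $v_0$ is only piecewise smooth on $\mathcal{T}_{h,0}$, so the fiber integral picks up jump contributions across faces $\mathcal{F}_{h,0}$ — these are precisely controlled by the second term of $s_{h,0}$, the $\sum_l h^{2l+1}\|\llbracket\nabla_n^l v_0\rrbracket\|^2_{\mathcal{F}_{h,0}}$ sum (with the lowest-order $l$ term doing the work after an inverse estimate). Then I integrate the pointwise bound over $x \in \partial\Omega_{i,\delta}$, use a co-area/trace argument together with quasiuniformity to convert the fiber-tube integral of $|\nabla_\Gamma^\perp v_0|^2$ into $\|\nabla_\Gamma^\perp v_0\|_{\mathcal{T}_{h,0}}^2$, and collect the $h$ and $\delta$ powers: the tube has thickness $\lesssim\delta$, and matching against the $\tau_0 h^{-\alpha}$ weight in $s_{h,0}$ produces the factor $\delta^2 h^{\alpha}$ on the right before dividing — ultimately yielding $\delta^2 h^{\alpha-2}\|v_0\|_{s_{h,0}}^2$ after one application of the inverse-type control \eqref{eq:sh0-inverseL2} or a direct trace inequality relating the boundary norm to the volume norm at cost $h^{-1}$ (hence the $h^{-2}$, i.e. the shift from $h^\alpha$ to $h^{\alpha-2}$).

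For the second bound \eqref{eq:v0-control}, I would just write $v_0 = (v_0 - v_0^e) + v_0^e$ on $\partial\Omega_{i,\delta}$, apply the triangle inequality, bound the first piece by \eqref{eq:v0-pert}, and bound $\|v_0^e\|_{\partial\Omega_{i,\delta}}$ by recognizing that $v_0^e = v_0\circ p_\Gamma$ is constant along normal fibers and so its trace on $\partial\Omega_{i,\delta}$ is comparable (via $p_\Gamma$, which is a bi-Lipschitz-type map between $\partial\Omega_{i,\delta}$ and $\Gamma$ with constants uniform for $\delta \le \delta_0$) to its trace on $\Gamma$; the latter is controlled by the trace of $v_0$ on $\partial\Omega_{i,\delta}$ up to lower-order terms, and $\|v_0\|_{\partial\Omega_{i,\delta}}$-type quantities are already built into the patch part of $\tn v\tn_h^2$ through $h^{-1}\|v_i - v_0\|^2_{\partial\Omega_{i,\delta}}$ combined with $h\|\nabla v_i\|^2_{\partial\Omega_{i,\delta}}$ and $\|\nabla v_i\|^2_{\Omega_{i,\delta}}$. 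The final simplification — that $h^{\alpha-2}\|v_0\|_{s_{h,0}}^2 \le \tn v\tn_h^2$ when $\alpha \ge 2$ and $h \le h_0$ (absorbing constants) — is immediate since $\|v_0\|^2_{s_{h,0}}$ is itself one of the terms in $\tn v\tn_h^2$ and $h^{\alpha-2} \le 1$.

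I expect the main obstacle to be the geometric bookkeeping in the fiber-integration step: carefully justifying that the normal fibers from $\partial\Omega_{i,\delta}$ to $\Gamma$ stay inside $\Omega_{h,0}$, parametrizing the tube so that the co-area change of variables has Jacobian bounded above and below uniformly in $\delta \le \delta_0$, and correctly accounting for the jump terms across $\mathcal{F}_{h,0}$ when the fiber crosses several mesh faces (this is where the exponents $2l+1$ in the second stabilization term and an inverse inequality $\|\llbracket\nabla_n v_0\rrbracket\|_{\mathcal{F}}\lesssim h^{-1}\|v_0\|_{\mathcal{T}}$-type estimate have to be matched precisely). The $\alpha$-power accounting is the other place to be careful — tracking that the $\delta$ from the fiber length, the tube thickness, and the $h^{-1}$ trace cost combine to give exactly $\delta^2 h^{\alpha - 2}$ rather than some other power.
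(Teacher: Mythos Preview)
Your plan for \eqref{eq:v0-pert} has a genuine gap in the power counting. Using Cauchy--Schwarz along the fiber gives
\[
|v_0(x)-v_0^e(x)|^2 \le \delta \int_{I(x,p_\Gamma(x))} |\nabla_\Gamma^\perp v_0|^2,
\]
so after integrating over $\partial\Omega_{i,\delta}$ you obtain $\delta$ times an $L^2$ integral of $\nabla_\Gamma^\perp v_0$ over a \emph{two-dimensional} ruled surface swept out by the fibers. Passing from that surface to the three-dimensional norm $\|\nabla_\Gamma^\perp v_0\|_{\mcT_{h,0}}^2$ by a trace/inverse step costs $h^{-1}$, so the best you get is $\delta\, h^{-1}\|\nabla_\Gamma^\perp v_0\|_{\mcT_{h,0}}^2 = \delta\, h^{\alpha-1}\|v_0\|_{s_{h,0}}^2$, not $\delta^2 h^{\alpha-2}$. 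There is no second factor of $\delta$ coming from ``the tube has thickness~$\lesssim\delta$''; the tube thickness is already accounted for by the fiber integration. The paper instead uses the H\"older/$L^\infty$ estimate $|v_0(x)-v_0^e(x)| \le \delta\,\|\nabla_\Gamma^\perp v_0\|_{L^\infty(I)}$, which squares to give the full $\delta^2$, and then establishes the technical inverse bound
\[
\int_{\partial\Omega_{i,\delta}} \|w\|_{L^\infty(\tilde I(x,p_\Gamma(x))\cap T)}^2\,dx \lesssim h^{-2}\|w\|_T^2
\]
elementwise (combining a line-to-ball inverse inequality, costing $h^{-3}$, with the fact that $|\partial\Omega_{i,\delta}\cap p_\Gamma^{-1}(T)|\lesssim h$). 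This is where the $h^{-2}$ actually comes from.

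Two smaller points. First, your concern about jump contributions along the fiber is misplaced: $V_{h,0}$ is a conforming space, so $v_0$ is continuous and the identity $v_0(x)-v_0(p_\Gamma(x))=\int_I t\cdot\nabla_\Gamma^\perp v_0$ holds without any face-jump terms; the second stabilization sum in $s_{h,0}$ plays no role in this estimate. Second, for \eqref{eq:v0-control} the paper does not treat $i=1,2$ uniformly via $v_0^e$. For $i=1$ it adds and subtracts the patch variable $v_1$ and uses a trace inequality together with the Poincar\'e control coming from the Dirichlet condition on $\partial\Omega$; only for $i=2$ does it pass through $v_0^e$, transferring $\|v_0^e\|_{\partial\Omega_{2,\delta}}$ to $\|v_0^e\|_{\partial\Omega_{1,\delta}}$ (since $v_0^e$ is constant along fibers) and then invoking the $i=1$ bound. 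Your uniform route, moving everything to $\Gamma$, would leave you needing to bound $\|v_0\|_\Gamma$ by $\tn v\tn_h$, which is essentially circular without the detour through the Dirichlet patch.
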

\begin{proof} {\bf (\ref{eq:v0-pert}).}  Let $I(x,p_\Gamma(x))$ be the line segment connecting $x$ and $p_\Gamma(x)$.  We then have 
\begin{align}
v_0(x)  - v_{0}^e(x)
=
v_0(x)  - v_{0} (p_\Gamma(x)) = \int_{ I(x,p_\Gamma(x)) } t \cdot \nabla_\Gamma^\perp v_0
\end{align}
where $t$ is the unit tangent vector to  $I(x,p_\Gamma(x))$.  Estimating the right-hand side using a Hölder inequality 
we get 
\begin{align}
|v_0(x)  - v_{0}^e (x) | \leq \delta  \| \nabla_\Gamma^\perp v_0 \|_{L^\infty( I(x,p_\Gamma(x)) ) }
\end{align}
Squaring and integrating over $\partial\Omega_{i,\delta}$ give
\begin{align}
\| v_0 - v_{0}^e \|_{\partial\Omega_{i,\delta}}^2
&
\leq
\delta^2 \int_{\partial\Omega_{i,\delta}} \| \nabla^\perp_\Gamma v_0 \|_{L^\infty( I(x,p_\Gamma(x)))}^2 \, dx
%\\&
%\lesssim
%\delta^2 \int_{\partial\Omega_{i,\delta}} \| \nabla^\perp_\Gamma v_0 \|_{L^\infty( \tilde I(x,p_\Gamma(x))\cap \mcT_{h,0})}^2 \, dx
%\\&
%\lesssim
%\delta^2 \int_{\partial\Omega_{i,\delta}} h^{-1} \| \nabla^\perp_\Gamma v_0 \|_{L^2(\tilde I(x,p_\Gamma(x))\cap \Omega_{h,0})}^2
\\& \label{eq:here-tech-bound}
\lesssim \delta^2 h^{-2} \| \nabla^\perp_\Gamma v_0 \|_{\mcT_{h,0}}^2
\\&
\lesssim \delta^2  h^{-2} h^\alpha \| v_0 \|_{s_{h,0}}^2
\end{align}
which is our desired estimate.
In \eqref{eq:here-tech-bound} we used the following technical bound
\begin{equation}\label{eq:technical-a}
\int_{\partial\Omega_{i,\delta}} \|  w  \|_{L^\infty( \tilde I(x,p_\Gamma(x))\cap T)}^2 \, dx 
\lesssim h^{-2} \| w  \|_T^2
\end{equation}
for an element $T \in \mcT_{h,0}$, where $w \in \mathbb{P}_k(T)$,  the polynomials of degree $k$ on $T$, and $\tilde I(x,p_\Gamma(x))$ is the straight line covering $I(x,p_\Gamma(x))$.  To verify (\ref{eq:technical-a}) we first 
recall that since the elements are shape regular and the mesh quasi-uniform there are  balls $B_{r_1} \subset T \subset B_{r_2}$,  with the same 
center and radii that satisfy $r_1 \sim r_2 \sim h$.   For any line $l$ in $\IR^3$ that intersects $T$ we have the inverse inequality 
\begin{align}
\| w \|^2_{L^\infty(l \cap T)}  &\lesssim  \| w \|^2_{L^\infty(l \cap B_{r_2})}  \lesssim  \| w \|^2_{L^\infty(l \cap B_{2 r_2})} 
\\ \label{eq:tech-b}
&\qquad \lesssim  h^{-1}  \| w \|^2_{l \cap B_{2 r_2}} \lesssim  h^{-3}  \| w \|^2_{B_{2 r_2}}  \lesssim  h^{-3}  \| w \|^2_{B_{r_1}}  \lesssim  h^{-3}  \| w \|^2_T
\end{align}  
where we used the fact that the length $|l \cap B_{2 r_2}|$ of the line segment $l \cap B_{2 r_2}$ satisfy $|l \cap B_{2 r_2}| > r_2 \gtrsim h$,  an 
inverse inequality to pass from the line to the ball $B_{2 r_2}$,  and finally an inverse inequality to pass to $B_{r_1}$ which is contained in $T$ 
by shape regularity.  Using (\ref{eq:tech-b}) we get 
\begin{align}
\int_{\partial \Omega_{\delta,i}}  \| w  \|^2_{L^\infty( \tilde I(x,p_\Gamma(x))\cap T)} dx  
&\lesssim
\int_{\partial \Omega_{\delta,i}}  h^{-3} \| w  \|^2_T  dx 
\\
&\lesssim  |\partial \Omega_{\delta,i} \cap p_\Gamma^{-1} (T)|  h^{-3} \| w  \|^2_T
\\
&\lesssim   h^{-2} \| w  \|^2_T 
\end{align}
where we finally used the fact that  $|\partial \Omega_{\delta,i} \cap p_\Gamma^{-1} (T)|\lesssim h$. This completes the verification of \eqref{eq:technical-a}, and hence, the proof of \eqref{eq:v0-pert}.

\noindent{\bf (\ref{eq:v0-control}).} For $i=1$ we add and subtract $v_1 \in V_{h,1}$ and estimate using standard inequalities
\begin{align}
\| v_0 \|_{\partial\Omega_{1,\delta}}^2
&\lesssim \| v_0 - v_1 \|_{\partial\Omega_{1,\delta}}^2 + \| v_1 \|_{\partial\Omega_{1,\delta}}^2
\\
&\lesssim \| v_0 - v_1 \|_{\partial\Omega_{1,\delta}}^2 + \| v_1 \|_{\Omega_{1,\delta}}^2 + \| \nabla v_1 \|_{\Omega_{1,\delta}}^2
\\
&\lesssim \| v_0 - v_1 \|_{\partial\Omega_{1,\delta}}^2 + \| \nabla v_1 \|_{\Omega_{1,\delta}}^2
\\  \label{eq:tech-c}
&\leq \tn v \tn_h^2
\end{align}
where we used a trace inequality followed by the control provided by the  Dirichlet condition on $\partial \Omega$.  In the case $i=2$ 
we instead add and subtract $v_{0}^e$, 
\begin{align}
\| v_0 \|_{\partial\Omega_{2,\delta}}
&\leq
\| v_0 - v_{0}^e \|_{\partial\Omega_{2,\delta}}
+
\| v_{0}^e \|_{\partial\Omega_{2,\delta}}
\\&\lesssim
\delta h^{\alpha/2-1} \| v_0 \|_{s_{h,0}}
+
\| v_{0}^e \|_{\partial\Omega_{1,\delta}}
\\&\leq
\delta h^{\alpha/2-1} \| v_0 \|_{s_{h,0}}
+
\| v_0 - v_{0}^e \|_{\partial\Omega_{1,\delta}}
+
\| v_0 \|_{\partial\Omega_{1,\delta}}
\\&\lesssim \label{eq:last-stab-bound}
\delta h^{\alpha/2-1} \| v_0 \|_{s_{h,0}}
+ \tn v \tn_h
\end{align}
where we used (\ref{eq:v0-pert}),  the fact that $v_{0}$ is constant orthogonally to $\Gamma$ to pass from 
$\partial \Omega_{\delta,2}$  to $\partial \Omega_{\delta,1}$,  and then the bound (\ref{eq:tech-c}) for $i=1$. This concludes the proof of \eqref{eq:v0-control}.
\end{proof}

\begin{lem}[Poincar\'e Inequality] 
Assuming $\alpha \geq 2$, it holds % there is a constant such that
\begin{align}\label{eq:poincare}
\boxed{
h^{-2} \| v_0 \|^2_{\mcT_{h,0}} +  \sum_{i=1}^N \| v_i \|^2_{\mcT_{h,i}}  \lesssim \tn v \tn^2_h , \qquad v \in W_h
}
\end{align}
and as a consequence  $\tn \cdot \tn_h$ is a norm on $W_h$.
\end{lem}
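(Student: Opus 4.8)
The plan is to bound the two families of terms on the left-hand side of \eqref{eq:poincare} separately. For the patch terms, the ghost-penalty control \eqref{eq:sh-inverseL2} with $m=0$ reduces matters to showing $\|v_i\|^2_{\Omega_{i,\delta}} \lesssim \tn v\tn_h^2$ for $i=1,2$, since then $\|v_i\|^2_{\mcT_{h,i}} \lesssim \|v_i\|^2_{\Omega_{i,\delta}} + \|v_i\|^2_{s_{h,i}} \lesssim \tn v\tn_h^2$. The two patches need different treatments because only the exterior patch $\Omega_1$ carries Dirichlet data. On $\Omega_{1,\delta}$ I would invoke exactly the Poincar\'e--Friedrichs step already used around \eqref{eq:tech-c} in the proof of Lemma \ref{lem:hybrid-variable-control}: since $v_1$ is controlled on the part of $\partial\Omega_{1,\delta}$ lying on $\partial\Omega$, we get $\|v_1\|^2_{\Omega_{1,\delta}} \lesssim \|\nabla v_1\|^2_{\Omega_{1,\delta}} \leq \tn v\tn_h^2$, with constant uniform in $\delta\le\delta_0$ because $\Omega_{1,\delta}$ is an $O(\delta_0)$ perturbation of the fixed domain $\Omega_1$.

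The interior patch $\Omega_2$ is the crux, since its only link to the data passes through the hybrid variable across the gap. Here I would use the trace--Poincar\'e inequality $\|v_2\|^2_{\Omega_{2,\delta}} \lesssim \|\nabla v_2\|^2_{\Omega_{2,\delta}} + \|v_2\|^2_{\partial\Omega_{2,\delta}}$ (again with $\delta$-uniform constant), and then split $v_2 = (v_2 - v_0) + v_0$ on $\partial\Omega_{2,\delta}$. The first part is controlled directly by the energy norm, $\|v_2 - v_0\|^2_{\partial\Omega_{2,\delta}} \le h\, h^{-1}\|v_2 - v_0\|^2_{\partial\Omega_{2,\delta}} \lesssim \tn v\tn_h^2$ using $h\le h_0$, and the second part is precisely the quantity estimated in \eqref{eq:v0-control}, which for $\alpha\ge 2$ together with $\delta\le\delta_0$ and the simplification noted at the end of Lemma \ref{lem:hybrid-variable-control} gives $\|v_0\|^2_{\partial\Omega_{2,\delta}} \lesssim \tn v\tn_h^2$. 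Combining, $\|v_2\|^2_{\Omega_{2,\delta}} \lesssim \tn v\tn_h^2$.

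It remains to bound $h^{-2}\|v_0\|^2_{\mcT_{h,0}}$. For this I would apply \eqref{eq:sh0-inverseL2} with $i=1$, giving $h^{-2}\|v_0\|^2_{\Omega_{h,0}} \lesssim \|v_0\|^2_{\partial\Omega_{1,\delta}} + \|v_0\|^2_{s_{h,0}}$; the second term is $\le \tn v\tn_h^2$ by the definition \eqref{eq:energy-norm} of the energy norm, and the first is $\lesssim \tn v\tn_h^2$ by \eqref{eq:v0-control} with $i=1$ and $\alpha\ge2$, exactly as above. Collecting the three bounds yields \eqref{eq:poincare}. That $\tn\cdot\tn_h$ is a norm then follows immediately: it is manifestly a seminorm on $W_h$, and \eqref{eq:poincare} shows $\tn v\tn_h = 0$ forces $v_0 \equiv 0$ on $\Omega_{h,0}$ and $v_i \equiv 0$ on $\Omega_{h,i}$, hence $v = 0$ in $W_h$.

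The main obstacle is the interior patch: with no boundary data on $\Omega_2$, the only coercivity comes from gluing $v_2$ to the hybrid variable $v_0$ over a gap of width $O(\delta)$, and making this quantitative is exactly where Lemma \ref{lem:hybrid-variable-control} and the choice $\alpha\ge 2$ are indispensable --- the latter ensures the geometric factor $\delta^2 h^{\alpha-2}$ multiplying $\|v_0\|^2_{s_{h,0}}$ stays bounded as $h\to 0$. A secondary, routine point is the uniformity of the Poincar\'e and trace--Poincar\'e constants over the perturbed domain family $\{\Omega_{i,\delta}\}_{\delta\le\delta_0}$, which holds since these are small perturbations of the fixed reference domains $\Omega_i$.
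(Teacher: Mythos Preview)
Your proof is correct, but the argument for the patch $L^2$-control differs from the paper's. The paper treats both patches at once via a duality trick: it introduces an auxiliary problem $-\Delta\phi=\psi$ on $\Omega$ with $\phi=0$ on $\partial\Omega$, tests against $v_i$ patchwise, integrates by parts, and splits the boundary terms by adding and subtracting $v_0$; the resulting products are bounded using the energy norm, \eqref{eq:v0-control}, trace inequalities for $\phi$, and the elliptic regularity $\|\phi\|_{H^2(\Omega)}\lesssim\|\psi\|_\Omega$, after which a tailored choice of $\psi$ closes the estimate. By contrast, you handle the two patches separately with direct Poincar\'e-type inequalities: the standard Poincar\'e--Friedrichs on $\Omega_{1,\delta}$ (exploiting the Dirichlet condition on $\partial\Omega$) and a trace--Poincar\'e on $\Omega_{2,\delta}$ combined with the splitting $v_2=(v_2-v_0)+v_0$ and \eqref{eq:v0-control}. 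Your route is more elementary and avoids the dual problem entirely, while the paper's duality approach is more uniform across patches and arguably scales more gracefully to the general multipatch setting hinted at by the $\sum_{i=1}^N$ in the statement, since it does not single out an exterior patch carrying the boundary data. For the hybrid-variable term both proofs coincide.
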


\begin{proof}
Let  $\phi$ be the solution to the dual problem 
\begin{equation}\label{eq:poincare-dual}
\text{$-\Delta \phi = \psi$ in $\Omega$} ,
\qquad 
\text{$\phi = 0$ on $\partial \Omega$}
\end{equation} 
with $\psi \in L^2(\Omega)$, which satisfies the standard regularity estimate
\begin{equation}\label{eq:poincare-dual-reg}
\| \phi \|_{H^{2}(\Omega)} \lesssim \|\psi \|_\Omega
\end{equation}
Consider first the  estimation of the bulk 
subdomain contributions. Using \eqref{eq:sh-inverseL2} we have
\begin{align}
\label{eq:init-calc-a}
\sum_{i=1}^2 \| v_i \|^2_{\mcT_{h,i}} &\lesssim \sum_{i=1}^2 \| v_i \|^2_{\Omega_{i,\delta}} + \| v_i \|^2_{s_{h,i}}
\end{align}
where the last term is trivially bounded by $\tn v\tn_h^2$.
To estimate $\sum_{i=1}^2 \| v_i \|^2_{\Omega_{i,\delta}}$ we multiply the dual problem
\eqref{eq:poincare-dual} by $v_i \in V_{h,i}$ and then using integration by parts on 
each of the patch domains 
$\Omega_{i,\delta}$,  $i=1,2$,  we obtain 
\begin{align}\label{eq:poincare-proof-a}
 \sum_{i=1}^2 (v_i,\psi)_{\Omega_{i,\delta}} 
 &=
 \sum_{i=1}^2 (\nabla v_i, \nabla \phi )_{\Omega_{i,\delta}} - (v_i, \nabla_n \phi )_{\partial \Omega_{i,\delta}}
 \\ \label{eq:poincare-proof-b}
 &=
 \sum_{i=1}^2 ( \nabla v_i, \nabla \phi )_{\Omega_{i,\delta}} 
 - (v_i - v_0, \nabla_n \phi )_{\partial \Omega_{i,\delta}}
- (v_{0}, \nabla_n \phi )_{\partial \Omega_{i,\delta}}
 \\ \label{eq:poincare-proof-c}
 &\lesssim
  \sum_{i=1}^2 \| \nabla v_i\|_{\Omega_{i,\delta}} \|\nabla \phi \|_{\Omega_{i,\delta}}
  \\\nonumber &\qquad\quad
 + \left(\|v_i - v_{0} \|_{\partial \Omega_{i,\delta}} + \| v_{0} \|_{\partial \Omega_{i,\delta}}\right) \| \nabla \phi \|_{\partial \Omega_{i,\delta}}
  \\ \label{eq:poincare-proof-e}
 &\lesssim
(1+\delta h^{\alpha/2-1}) \tn v \tn_h \underbrace{\Big( \sum_{i=1}^2 \|\nabla \phi \|^2_{\Omega_{i,\delta}} 
 + \| \phi \|^2_{H^{2}(\Omega_{i,\delta})} \Big)^{1/2}}_{ \lesssim \| \phi \|_{H^{2}(\Omega)}}
\\ \label{eq:poincare-proof-f}
&\lesssim (1+\delta h^{\alpha/2-1}) \tn v \tn_h \| \psi \|_{\Omega}
\end{align}
where in (\ref{eq:poincare-proof-b}) we added and subtracted $v_{0}$ 
in the boundary terms; in (\ref{eq:poincare-proof-c}) we 
used the Cauchy-Schwarz inequality; in (\ref{eq:poincare-proof-e}) we used the definition 
 of the energy norm (\ref{eq:energy-norm}), the control for $v_0$ we have from \eqref{eq:v0-control}, and a standard 
 trace inequality for $\phi$ on $\Omega_{\delta,i}$;
and finally, in (\ref{eq:poincare-proof-f}) we used the regularity assumption (\ref{eq:poincare-dual-reg}). Choosing the data $\psi\in L^2(\Omega)$ to the dual problem as
\begin{align}
\psi=
\left\{
\begin{alignedat}{2}
&v_1 &\quad&\text{on $\Omega_{1,\delta}$}
\\
&v_2 &\quad&\text{on $\Omega_{2,\delta}\setminus\Omega_{1,\delta}$}
\\
&0 &\quad&\text{on $\Omega\setminus (\Omega_{1,\delta}\cup \Omega_{2,\delta})$}
\end{alignedat}
\right.
\end{align}
we have
\begin{align}
\| \psi \|_{\Omega}^2 = \| v_1 \|_{\Omega_{1,\delta}}^2 + \| v_2 \|_{\Omega_{2,\delta}\setminus\Omega_{1,\delta}}^2
\leq \sum_{i=1}^2 \| v_i \|_{\Omega_{i,\delta}}^2
\end{align}
and thus we obtain 
\begin{equation} \label{eq:poincare-patch}
\sum_{i=1}^2 \| v_i \|^2_{\mcT_{h,i}} \lesssim (1+\delta h^{\alpha/2-1}) \tn v \tn_h^2
\lesssim
\tn v \tn_h^2
\end{equation}
where we in the last inequality use $\alpha\geq 2$.

Finally, using \eqref{eq:sh0-inverseL2}, \eqref{eq:v0-control}, and $\alpha \geq 2$ we directly obtain a bound for the hybrid variable
\begin{align}
h^{-2} \| v_0 \|^2_{\mcT_{h,0}} 
\lesssim 
\| v_0 \|^2_{\partial\Omega_{1,\delta}} + \| v_0 \|^2_{s_{h,0}} 
\lesssim 
 \tn v \tn_h^2
\end{align}
which concludes the proof.
\end{proof}

\paragraph{Continuity and Coercivity.}
The form $A_h$ is continuous
\begin{align}\label{eq:cont}
\boxed{
A_h(v,w) \lesssim \tn v \tn_h \tn w \tn_h,  \qquad v,w \in W^e + W_h
}
\end{align} 
 and for $\beta > 0$ large enough coercive
 \begin{align}\label{eq:coer}
\boxed{ 
 \tn v \tn_h^2 \lesssim A_h(v,v),  \qquad v \in  W_h
 }
\end{align} 
The continuity follows from the Cauchy-Schwarz inequality and for the coercivity, we note that 
\begin{align}
A_h(v,v) &= \| v_0 \|^2_{s_{h,0}} + \sum_{i=1}^2 \|\nabla v_i \|^2_{\Omega_{i,\delta}} + \| v_i \|_{s_{h,i}}^2
\\&\nonumber \qquad\qquad\qquad\quad
- 2(\nabla_n v_i, v_i - v_0)_{\partial \Omega_{i,\delta}} 
+ \beta h^{-1} \| v_i - v_0 \|^2_{\partial \Omega_{\delta,i}}
\end{align}
and we can use the usual arguments provided the parameter $\beta$ is large enough.  

\paragraph{Interpolation.}
Before deriving the error estimates we recall some interpolation results.
By virtue of the patch extensions \eqref{eq:extension-patch} and interface extension \eqref{eq:extension-interface} the three fields of a function $v\in W^e$ is defined on the full mesh domains $\Omega_{h,0}$, $\Omega_{h,1}$, and $\Omega_{h,2}$.
We define an interpolation operator
\begin{align}
\pi_{h} : W^e \ni v=(v_0;v_1;v_2) \mapsto (\pi_{h,0}v_0;\pi_{h,1}v_1;\pi_{h,2}v_2) \in W_h
\end{align}
where $\pi_{h,i} : H^1(\Omega_{h,i}) \rightarrow V_{h,i}$ is the Scott-Zhang interpolation operator.  We 
choose the Scott-Zhang operator to preserve strong Dirichlet boundary conditions on $\partial \Omega$.

We now derive an interpolation estimate in the energy norm \eqref{eq:energy-norm}.
First, we consider the interpolation of the patch fields.
Combining standard interpolation error estimates and the stability of the extension operator we obtain
\begin{align} \label{eq:patch-interpolant}
\| ( I - \pi_{h,i} ) v_i^e \|_{H^m(\Omega_{i,\delta})} &\lesssim h^{p+1 - m} \| v_i \|_{H^{p+1}(\Omega_{i})}, \qquad m=0,1
\end{align} 
In the boundary terms in $\tn v \tn_h$ we separate the patch fields $v_i$ from the hybrid variable field $v_0$ using the triangle inequality, and then move $v_i$ onto $\Omega_{i,\delta}$ using a trace inequality.
The remaining patch field
term in $s_{h,i}$ can be directly estimated using elementwise trace inequalities and interpolation estimates. 
Next, we consider the interpolation of the hybrid variable field.
Similarly, as for \eqref{eq:patch-interpolant} we combine standard interpolation estimates with the stability of the extension operator and obtain
\begin{align}
\| ( I - \pi_{h,0} ) v_0^e \|_{H^m(\Omega_{h,0})} &\lesssim h^{p+2 - m} \| v_0 \|_{H^{p+1}(\Gamma)}, \qquad m=0,1
\end{align}
On the boundary terms, we apply an elementwise trace inequality to move onto $\Omega_{h,0}$ and then apply the above estimate.
What remains is to estimate the $s_{h,0}$-norm, where the first term from \eqref{eq:blockstab} is estimated
\begin{align} \label{:eq:sh0-partII}
h^{-\alpha} \|\nabla_\Gamma^\perp  (I - \pi_{h,0}) u_0^e \|^2_{\Omega_{h,0}}
&\leq
h^{-\alpha}
\| ( I - \pi_{h,0} ) u_0^e \|_{H^1(\Omega_{h,0})}^2
\lesssim
h^{2p+2 - \alpha} \| u_0 \|_{H^{p+1}(\Gamma)}
\end{align}
which holds for $\alpha \leq 2$, and the second term is estimated analogously to the patchwise $s_{h,i}$-norm.
Combining these estimates we obtain
\begin{equation}\label{eq:interpol}
\boxed{\tn v - \pi_h v \tn_h \lesssim  h^p \Big( \| v \|_{H^{p+1}(\Omega)} +   \| v \|_{H^{p+1}(\Gamma)} \Big) }
\end{equation}

\paragraph{Error Estimate.} We are now ready to prove an error estimate in the energy norm. 

\begin{thm}[Energy Norm Error] \label{thm:energy}
For $\alpha = 2$, it holds
\begin{align}
\boxed{\tn u^e - u_h \tn_h \lesssim (h^p + h^{-1/2}\delta) \Big( \| u \|_{H^{p+1}(\Omega)} + \| u \|_{H^{p+1}(\Gamma)} + \| u_0^e \|_{W^{2}_\infty(\Omega\cap U_\delta(\Gamma))} \Big) } 
\end{align}
\end{thm}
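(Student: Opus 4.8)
The plan is a Strang-type argument resting on the coercivity \eqref{eq:coer}, continuity \eqref{eq:cont}, and interpolation \eqref{eq:interpol} estimates already established, so the only genuinely new ingredient is the consistency error. Split $u^e-u_h = (u^e-\pi_h u^e) + (\pi_h u^e - u_h)$; the first term is bounded directly by \eqref{eq:interpol}. For the second, put $e_h = \pi_h u^e - u_h\in W_h$ and use coercivity together with $A_h(u_h,e_h)=l_h(e_h)$ (since $e_h\in W_h$) to get
\[
\tn e_h\tn_h^2 \lesssim A_h(e_h,e_h) = A_h(\pi_h u^e - u^e, e_h) + \big(A_h(u^e,e_h) - l_h(e_h)\big).
\]
The first term on the right is $\lesssim \tn u^e - \pi_h u^e\tn_h\tn e_h\tn_h$ by continuity, which with \eqref{eq:interpol} yields the $h^p$ contribution. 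Hence the theorem follows once we show the consistency error satisfies $|A_h(u^e,v)-l_h(v)| \lesssim (h^p+h^{-1/2}\delta)\,\mathcal M\,\tn v\tn_h$ for $v\in W_h$, where $\mathcal M := \|u\|_{H^{p+1}(\Omega)} + \|u\|_{H^{p+1}(\Gamma)} + \|u_0^e\|_{W^2_\infty(\Omega\cap U_\delta(\Gamma))}$.

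To expand the consistency error, recall from the derivation of the method that $(f_i^e,v_i)_{\Omega_{i,\delta}} = (-\Delta u_i^e,v_i)_{\Omega_{i,\delta}}$, so Green's formula on each perturbed patch gives $l_h(v) = \sum_i (\nabla u_i^e,\nabla v_i)_{\Omega_{i,\delta}} - (\nabla_n u_i^e, v_i)_{\partial\Omega_{i,\delta}}$. Subtracting this from $A_h(u^e,v)$, the bulk gradient terms cancel and the stabilization contributions $s_{h,0}(u_0^e,v_0)$ and $s_{h,i}(u_i^e,v_i)$ drop out — their face-jump parts vanish because $u^e$ is a single smooth function on each mesh, and $\nabla_\Gamma^\perp u_0^e = 0$ because $u_0^e = u_0\circ p_\Gamma$ is constant in the direction normal to $\Gamma$ (using the closest-point extension of $t_\Gamma$). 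What remains is
\[
A_h(u^e,v)-l_h(v) = \sum_{i=1}^2 \Big[ (\nabla_n u_i^e, v_0)_{\partial\Omega_{i,\delta}} - (u_i^e - u_0^e, \nabla_n v_i)_{\partial\Omega_{i,\delta}} + \beta h^{-1}(u_i^e - u_0^e, v_i - v_0)_{\partial\Omega_{i,\delta}} \Big].
\]
The last two groups use the single observation that $u_i^e - u_0^e = 0$ on $\Gamma$ (the interface condition $u_i = u_0$), so that a one-dimensional Taylor estimate in the normal direction over the strip between $\Gamma$ and $\partial\Omega_{i,\delta}$ gives $\|u_i^e - u_0^e\|_{\partial\Omega_{i,\delta}} \lesssim \delta\,\mathcal M$; pairing this with the energy-norm bounds $\|\nabla v_i\|_{\partial\Omega_{i,\delta}} \lesssim h^{-1/2}\tn v\tn_h$ and $\|v_i - v_0\|_{\partial\Omega_{i,\delta}} \lesssim h^{1/2}\tn v\tn_h$ read off from \eqref{eq:energy-norm} produces the $h^{-1/2}\delta$ contribution.

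The flux term $\sum_i (\nabla_n u_i^e, v_0)_{\partial\Omega_{i,\delta}}$ is the crux. I would split $v_0 = (v_0 - v_0^e) + v_0^e$. For the first part, a trace inequality for $\nabla u_i^e$ on the curve together with Lemma \ref{lem:hybrid-variable-control}, estimate \eqref{eq:v0-pert}, gives $\sum_i(\nabla_n u_i^e, v_0 - v_0^e)_{\partial\Omega_{i,\delta}} \lesssim \delta h^{\alpha/2-1}\|u\|_{H^{p+1}(\Omega)}\|v_0\|_{s_{h,0}}$, which for $\alpha=2$ is $\lesssim \delta\,\mathcal M\,\tn v\tn_h$. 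For the second part, the key is that $v_0^e$ is constant along normals to $\Gamma$, which lets one pull the integral $\sum_i(\nabla_n u_i^e, v_0^e)_{\partial\Omega_{i,\delta}}$ back to an integral over $\Gamma$; there the leading-order term is $\big(v_0, \sum_i \nabla_{\nu_i} u_i\big)_\Gamma$, which vanishes by the flux interface condition in \eqref{eq:model-problem-interface}, while the remainder — stemming from the $O(\delta)$ discrepancy between $\partial\Omega_{i,\delta}$ and $\Gamma$ in position, surface measure and normal direction — is $\lesssim \delta\,\mathcal M\,\tn v\tn_h$ using the regularity of $u$ near $\Gamma$. Summing all contributions, the consistency error is $\lesssim (\delta + h^{-1/2}\delta)\mathcal M\tn v\tn_h \lesssim h^{-1/2}\delta\,\mathcal M\,\tn v\tn_h$ for $h\le h_0$, which with \eqref{eq:interpol} completes the proof.

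I expect the main obstacle to be exactly this transfer of the flux integral from the perturbed curves $\partial\Omega_{i,\delta}$ back to $\Gamma$: one must control the Jacobian and the change of normal of the perturbation while assuming only $\partial\Omega_{i,\delta}\subset U_\delta(\Gamma)$, which is precisely what forces the extra $W^2_\infty$ regularity of $u_0^e$ into the estimate. The value $\alpha=2$ is pinned from both sides — any $\alpha>2$ breaks the interpolation bound \eqref{:eq:sh0-partII} for the first stabilization term, whereas any $\alpha<2$ breaks estimate \eqref{eq:v0-pert}, the Poincar\'e inequality \eqref{eq:poincare}, and hence the coercivity-level control.
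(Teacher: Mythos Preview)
Your proposal is correct and follows essentially the same Strang-type route as the paper: coercivity plus continuity plus interpolation reduce the problem to the consistency error, which is then split into the $u_i^e-u_0^e$ pairings (handled by the $\delta$-Taylor bound) and the flux term $\sum_i(\nabla_n u_i^e,v_0)_{\partial\Omega_{i,\delta}}$, itself split via $v_0=(v_0-v_0^e)+v_0^e$ with Lemma~\ref{lem:hybrid-variable-control} controlling the first piece.

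The one place where the paper differs slightly from what you sketch is precisely the step you flag as the main obstacle. Rather than pulling the integral $\sum_i(\nabla_n u_i^e,v_0^e)_{\partial\Omega_{i,\delta}}$ back to $\Gamma$ and tracking the Jacobian and normal perturbations, the paper keeps the integration on $\partial\Omega_{i,\delta}$ and instead inserts $0=\sum_i\big((\nabla_{n_i}u_i)|_\Gamma\big)^e$, the closest-point extension of the exact flux trace, directly into the integrand; the difference $\nabla_n u_i^e-\big((\nabla_{n_i}u_i)|_\Gamma\big)^e$ is then $O(\delta)$ in $L^\infty$ by a one-line Taylor argument, and the remaining factor $\|v_0^e\|_{\partial\Omega_{i,\delta}}$ is bounded via $h^{-1}\|v_0\|_{\mathcal T_{h,0}}$ and the Poincar\'e inequality~\eqref{eq:poincare}. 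This sidesteps the change-of-variables bookkeeping entirely while arriving at the same $\delta\,\mathcal M\,\tn v\tn_h$ bound.
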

\begin{proof} It follows from coercivity that 
\begin{align}
\tn \pi_h u^e - u_h \tn_h 
\lesssim \inf_{v \in W_h \setminus \{0\}}
\frac{A_h(\pi_h u^e - u_h,v)}{\tn v \tn_h}
\end{align}
and we need to estimate the numerator.  We have 
\begin{align}
A_h(\pi_h u^e - u_h,v)&=A_h(\pi_h u^e ,v) - A_h(u_h,v)
\\
&= A_h(\pi_h u^e ,v) - l_h(v)
\\
&= a_h(\pi_h u^e ,v) + s_{h,0}(\pi_h u_0^e,v) - l_h(v)
\\
&= \underbrace{a_h(\pi_h u^e - u ,v)}_{I} + \underbrace{s_{h,0}(\pi_h u_0^e,v)}_{II}  
+ \underbrace{ a_h(u^e,v)  - l_h(v) }_{III}
\end{align}
Here $I$ is estimated using continuity (\ref{eq:cont}) and the interpolation error estimate (\ref{eq:interpol}), 
\begin{align}
a_h(\pi_h u^e - u^e ,v) \lesssim \tn \pi_h u^e - u^e  \tn_h  \tn v \tn_h 
\end{align}
For $II$ we have
\begin{equation}\label{eq:II}
\| \pi_h u_0^e \|_{s_{h,0}} = \| (\pi_h - I) u_0^e \|_{s_{h,0}} \lesssim h^p \| u_0 \|_{H^{p+1}(\Gamma)}
\end{equation}
where we, without affecting the value, can subtract $u_0^e$ in the $s_{h,0}$-norm since we for the first term in the norm have
\begin{align}
h^{-\alpha} \|\nabla_\Gamma^\perp  \pi_{h,0} u_0^e \|^2_{\Omega_{h,0}} 
&=
h^{-\alpha} \|\nabla_\Gamma^\perp  (\pi_{h,0} - I) u_0^e \|^2_{\Omega_{h,0}}
\end{align}
as the extension $u_0^e$ is constant orthogonal to $\Gamma$,
and the second term is defined in terms of jumps over mesh edges, which are zero for sufficiently regular $u_0^e$.
For $III$ we use partial integration 
\begin{align}
III &=  \sum_{i=1}^2 (\nabla u_i^e, \nabla v_i)_{\Omega_{i,\delta}} - (\nabla_n u_i^e, v_i - v_0)_{\partial \Omega_{i,\delta}} 
\\
&\qquad \quad
- (u_i^e - u_0^e, \nabla_n v_i)_{\partial \Omega_{i,\delta}} 
+ \beta h^{-1} ( u_i^e - u_0^e, v - v_0)_{\partial \Omega_{\delta,i}}
- (f_i^e,v_i )_{\Omega_{\delta,i}}
\\
&=  \sum_{i=1}^2 \underbrace{-(\Delta u_i^e, v_i)_{\Omega_{i,\delta}} 
+ (\nabla_n u_i^e, v_i )_{\partial \Omega_{i,\delta}} 
- (\nabla_n u_i^e, v_i - v_0)_{\partial \Omega_{i,\delta}} - (f_i^e,v_i )_{\Omega_{\delta,i}}}_{=\sum_{i=1}^2 (\nabla_n u_i^e, v_0)_{\partial \Omega_{i,\delta}}}
\\
&\qquad \quad
- (u_i^e - u_0^e, \nabla_n v_i)_{\partial \Omega_{i,\delta}} 
+ \beta h^{-1} ( u_i^e - u_0^e, v_i - v_0)_{\partial \Omega_{\delta,i}}
\\
&= \sum_{i=1}^2 (\nabla_n u_i^e, v_0)_{\partial \Omega_{i,\delta}} 
 - (u_i^e - u_0^e, \nabla_n v_i)_{\partial \Omega_{i,\delta}} 
+ \beta h^{-1} ( u_i^e - u_0^e, v_i - v_0)_{\partial \Omega_{\delta,i}}
\\
&\leq \Big|  \sum_{i=1}^2 (\nabla_n u_i^e, v_0)_{\partial \Omega_{i,\delta}}  \Big|
\\
&\qquad + \sum_{i=1}^2 \|u_i^e - u_0^e \|_{\partial \Omega_{i,\delta}}  \| \nabla_n v_i\|_{\partial \Omega_{i,\delta}} 
+ \beta h^{-1} \| u_i^e - u_0^e \|_{\partial \Omega_{i,\delta}} \|v_i - v_0 \|_{\partial \Omega_{\delta,i}}
\\ \label{eq:xyz}
&\lesssim \underbrace{\Big|  \sum_{i=1}^2 (\nabla_n u_i^e, v_0)_{\partial \Omega_{i,\delta}}  \Big|}_{III_1}
+ \underbrace{\Big( \sum_{i=1}^2 h^{-1/2} \|u_i^e - u_0^e \|_{\partial \Omega_{i,\delta}}\Big)}_{III_2}  \tn v \tn_h
\end{align}
To estimate $III_1$,  we add and subtract $v_{0}^e$ and utilize the interface condition \eqref{eq:model-problem-interface} to insert  $0=\sum_{i=1}^2 ((\nabla_{n_i} u_i)|_\Gamma)^e$, where the implied extension is $E_0$, in the second 
term, 
\begin{align}
 &\sum_{i=1}^2 (\nabla_n u_i^e, v_0)_{\partial \Omega_{i,\delta}} =  \sum_{i=1}^2 (\nabla_n u_i^e , v_0 - v_{0}^e)_{\partial \Omega_{i,\delta}} 
 +   \sum_{i=1}^2 (\nabla_n u_i^e - ((\nabla_n u_i)|_\Gamma)^e  ,  v_{0}^e)_{\partial \Omega_{i,\delta}}
\\
&\lesssim  \sum_{i=1}^2 \|\nabla_n u_i^e\|_{\partial \Omega_{i,\delta}}  \|v_0 - v_{0}^e\|_{\partial \Omega_{i,\delta}} 
 +   \sum_{i=1}^2 \| \nabla_n u_i^e - ( (\nabla_n u_i)|_\Gamma )^e \|_{\partial \Omega_{i,\delta}}  \|v_{0}^e\|_{\partial \Omega_{i,\delta}}
\\
 &\lesssim  \sum_{i=1}^2 \|u_i^e \|_{H^2 (\Omega_{i,\delta})} \delta h^{\alpha/2 -1} \| v_0 \|_{s_{h,0}}  
 +   \sum_{i=1}^2\delta \|  u  \|_{ W^2_\infty( \Omega  \cap U_\delta(\Gamma))}  \|v_{0}^e\|_{\partial \Omega_{i,\delta}}
\end{align}
In the last inequality, we utilize \eqref{eq:v0-pert} for the first term and a Taylor argument for the second term. We then use $\|v_{0}^e\|_{\partial \Omega_{i,\delta}} \lesssim h^{-1} \|v_{0}\|_{\mcT_{h,0}}$ and the Poincar\'e inequality \eqref{eq:poincare} to bound the test function in terms of the energy norm.
Finally,  for $III_2$ we have using similar estimates
\begin{align}
h^{-1} \|u^e - u_0^e \|^2_{\partial \Omega_{i,\delta}} \lesssim h^{-1} \delta^2 \| u \|^2_{W^1_\infty(U_\delta(\Gamma))}
\end{align}
and thus the proof is complete.
\end{proof}

\begin{rem}[Analysis in the Isogeometric Case]\label{rem:iga-ext}
%out-of-plane gaps, kinks, surfaces, cutfem
To extend the analysis above to the isogeometric multipatch case, we should consider the following aspects:%, most of which are covered in \cite{JonLarLar17}:
\begin{itemize}
\item \emph{Surface patches.}
As described in Remark~\ref{rem:extension2isogeometry}, extending the method to the isogeometric case, i.e., parametrically described surface patches, is done by transforming the terms back to a two-dimensional reference domain as in \eqref{eq:ahi-surf}. In the reference domain, the problem is a standard problem with variable coefficients given by the metric tensor, and this can easily be included in the analysis by assuming suitable bounds on the patch parametrization. See \cite{JonLarLar17} for details. For a complete analysis, we should also consider parametrization errors yielding an approximate metric tensor, which can be handled using a Strang type argument, see \cite{MR3903205} for a similar situation.

\item \emph{Sharp edges and out-of-plane gaps.}
Cases when the gap occurs where the patches coupled over the interface do not lie on the same smooth surface are covered by the above analysis thanks to the use of the three-dimensional hybrid variable. This can be seen by reviewing the proof of Lemma~\ref{lem:hybrid-variable-control} where it is not necessary that the exact interface $\Gamma$ is placed in the same plane as each patch. Further, since the interface conditions in the surface case \eqref{eq:surf-interface-cond} are formulated such that sharp edges are allowed, this in itself poses no additional difficulty in the analysis. See also \cite{HanJonLar17}, where formulations for problems on surfaces with sharp edges are developed and analyzed. 

%, for instance when the exact geometry is not $C^1$ over the interface, or the interface connects more than two patches,

% at the interface are actually covered by the above analysis thanks to the use of the three-dimensional hybrid variable control
%What might seem...
%Notably... agnostic...

\item \emph{Trimmed reference domains.} Since we in the method \eqref{eq:method} allow for trimmed patches, such that the computational mesh for each patch is not required to conform to its reference domain geometry, a complete analysis should also include the ghost penalty stabilization terms discussed in Remark~\ref{rem:sh_i}. Note that in the isogeometric multipatch situation, it is natural to append this stabilization in the reference domain, see \cite{JonLarLar17}.
\end{itemize}

\end{rem}

\section{Numerical Experiments}

\paragraph{Implementation.}

The method was implemented in MATLAB, largely following the details presented in \cite{JonLarLar17,HanJonLar17}. This implementation utilizes the available parametric mappings in the surface description, where patchwise surface terms are pulled back to a two-dimensional reference domain before assembly. An upshot of the hybridized approach is that the assembly of the interface terms is done patchwise, such that no knowledge about other patches on the other side of the interface is required. Hence, there is also no need of finding the corresponding point in adjacent patches, which can be cumbersome to do efficiently and robustly since it involves the inverse of surface mappings -- in particular when the interfaces are not exact.

A new component for this work is the implementation of the hybrid variable, which entails the construction of its approximation space and the assembly of the hybrid variable stabilization. In our implementation, the hybrid variable mesh is extracted from a three-dimensional structured background hexahedral grid, by traversing all patch boundaries without boundary conditions and marking elements passed in the background grid, and we equip this mesh with a continuous approximation space.
%This mesh is then equipped with full regularity tensor product B-spline basis functions of degree $p$.
The assembly of the stabilization includes evaluation of $\nabla_\Gamma^\perp = (I - t_\Gamma \otimes t_\Gamma)\nabla_{\IR^3}$, the part of the gradient normal to the (artificial) interface $\Gamma$. We base our implementation on interpolation of $t_\Gamma\otimes t_\Gamma$ using tensor product Lagrange elements of degree $p$, where the value for $t_\Gamma$ at each interpolating point is set to the tangent value at the nearest closest point on the patch boundaries. Different approaches to this assembly are outlined in Remark~\ref{rem:hybrid-stab}, and we believe that, in practice, an octree-based mesh structure in combination with a projection-based method for extending the tangential field $t_\Gamma$, avoiding the closest point mapping, would give the most flexible, efficient and robust implementation.

For the import of CAD geometries into MATLAB we utilize IGES Toolbox~\cite{IGES}, which allows us to import CAD surfaces in IGES format. This gives us a set of patches described via NURBS as well as a set of trim curves. To evaluate the NURBS and its derivatives we use the NURBS toolbox\cite{NURBS}.

\paragraph{Parameter Choices and Approximation Spaces.}
As described above we cover all patch boundaries corresponding to interfaces with a structured hexahedral mesh with global mesh size $h$, where typically $h \geq \delta$, which is the mesh for the hybrid variable. We equip each surface patch $\Omega_{i,\delta}$ with a structured quadrilateral mesh in the two-dimensional reference domain, covering $\widehat\Omega_{i,\delta}$, where the mesh size in the reference domain is chosen such that the mapped elements on the surface approximately have size $h$. On each mesh, we define an approximation space using full regularity tensor product B-splines of degree $p$, where $p=2$ unless otherwise stated.
%This means that we have separate approximation spaces on each patch, and that the hybrid variable approximation space is guaranteed continuous in simply connected parts of the hybrid variable mesh.
For the Nitsche penalty parameter we use $\beta = 25 p^2$ and for the stabilization parameters we use $\tau_0=\tau_i=0.01$.

\paragraph{Convergence Studies.}
As a model problem for our quantitative studies we consider the Laplace-Beltrami problem $-\Delta_\Omega u = f$ with non-homogeneous Dirichlet boundary conditions. We construct a sequence of surface domains with a gap, where we can vary the gap size $\delta$, and which is illustrated in Figure~\ref{fig:surface-model-problem}.
Specifically, we map the unit square onto the surface of a torus, where the unit square has the partition
\begin{align}
\widehat\Omega_{1,\delta} = \{ (\hatx,\haty) : 0 < \hatx,\haty < 1 \,;\ \hatx^2+ \haty^2 > 0.9^2  \}
\,,\
\widehat\Omega_{2,\delta} = \{ (\hatx,\haty) : \hatx^2+ \haty^2 < 0.9^2  \}
\end{align}
where $\widehat\Omega_{2,\delta}$ is an inner disc and $\widehat\Omega_{1,\delta}$ is the remaining outer part.
We map these reference domains onto the surface, such that $\Omega_i = F_i(\widehat\Omega_{i,\delta})$, using the mappings
\begin{align}
F_1(\hatx, \haty) &=  [(R + r \cos{\scriptstyle\frac{5\pi\hatx}{3}} ) \cos{\scriptstyle\frac{5\pi\haty}{18}},\,
 (R + r \cos{\scriptstyle\frac{5\pi\hatx}{3}}) \sin{\scriptstyle\frac{5\pi\haty}{18}},\,
 r \sin{\scriptstyle\frac{5\pi\hatx}{3}} ]
\\
F_2(\hatx, \haty) &= F_1(\hatx, \haty)
+ \delta [\cos{\scriptstyle\frac{5\pi}{6}} \cos{\scriptstyle\frac{5\pi}{36}},\, \cos{\scriptstyle\frac{5\pi}{6}} \sin{\scriptstyle\frac{5\pi}{36}},\, \sin{\scriptstyle\frac{5\pi}{6}} ]
\label{eq:param_torus}
\end{align}
where we note that the latter mapping is shifted a distance $\delta$ in the normal direction of the disc midpoint.
We manufacture a problem on the exact ($\delta=0$) surface with known analytical solution $u=\sin(3x)\sin(3y)\sin(3z)$. This ansatz is a restriction of a function of three-dimensional Cartesian coordinates to the surface, and to evaluate the data $f=-\Delta_\Omega u$ we express the Laplace-Beltrami operator
$\Delta_\Omega u = \Delta_{\IR^3} u - \partial_{nn} u - 2H\partial_n u$
where $\Delta_{\IR^3}$ is the three-dimensional Laplacian, $\partial_{n}$ and $\partial_{nn}$ are the first and second order derivatives in the direction of the surface normal $n$, and $H$ is the mean curvature of the surface.
When measuring the error in the experiments below, we on the shifted patch $\Omega_{2,\delta}$ lift the analytical solution from the exact surface using the closest point mapping of the torus.

In the standard situation we foresee, the gap is caused by the finite precision in the parameterization of the trim curves in the CAD description, meaning that the gap size $\delta$ is fixed with respect to the mesh size $h$. Convergence results for the model problem with various sizes of a fixed gap are presented in Figure~\ref{fig:fixed-gap-convergence}. As expected, we note optimal order convergence until the error levels out due to the geometric error induced by the gap, where a smaller gap size gives a smaller lower bound on the error.

To give some validation to our error estimate in Theorem~\ref{thm:energy}, we in Figure~\ref{fig:gap-scaling-convergence} also consider the convergence of the model problem where the gap size $\delta$ is scaled by the mesh size $h$ to various powers. We note that gap size scalings of $\delta \sim h^p$ and $\delta \sim h^{p+1}$ seem to be needed to achieve optimal order convergence in $H^1$-seminorm and $L^2$-norm, respectively. The former result is $h^{1/2}$ better than would be expected considering the energy norm bound in Theorem~\ref{thm:energy}. We believe our estimate to be sharp and that the reason for this discrepancy is that the $H^1$-seminorm on the patches is in fact better than the full energy norm that also includes the interface terms. We will return to the analysis of this in another contribution.

\begin{figure}
\centering	
\begin{subfigure}[t]{0.35\linewidth}\centering
\includegraphics[width=0.9\linewidth]{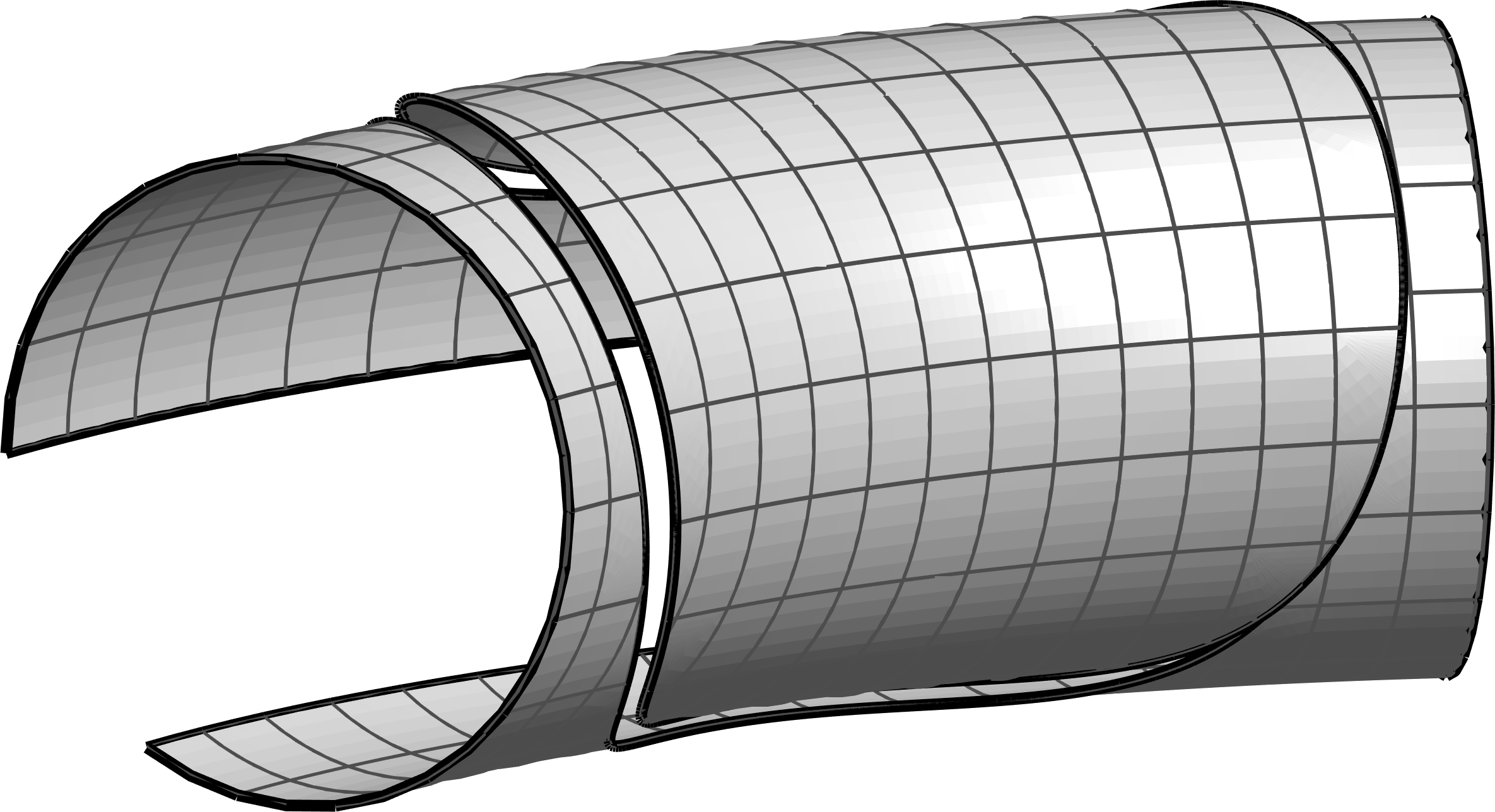}
\subcaption{Perturbed surface domain}
\label{fig:cad}
\end{subfigure}
\begin{subfigure}[t]{0.35\linewidth}\centering
\includegraphics[width=0.9\linewidth]{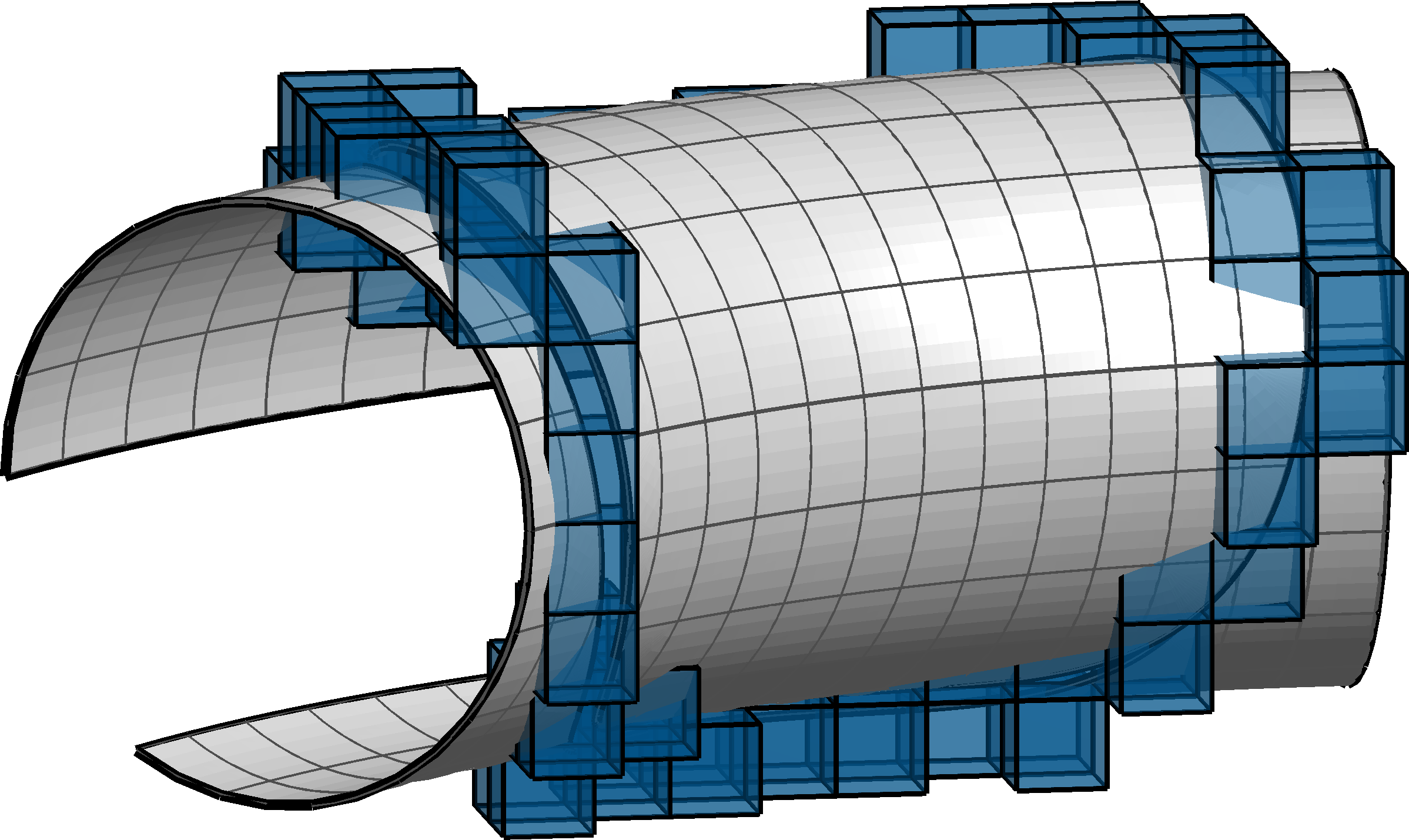}
\subcaption{Hybrid variable mesh}
\label{fig:physdom}
\end{subfigure}
\caption{\emph{Surface model problem.} A sequence of model geometries is constructed by decomposing the unit square into two parts by cutting away a circle, mapping the two parts onto the surface of a torus, and shifting the inner part (disc) a distance $\delta$ in the normal direction at its midpoint. The resulting domain (a) features a gap where the direction of the gap varies over the interface, where in some parts the gap is mainly in the tangential plane of the torus surface and some parts are mainly normal to the surface.
In (b) the hybrid variable mesh is shown, which covers both sides of the interface and is extracted from a uniform background grid.
}
\label{fig:surface-model-problem}
\end{figure}

\begin{figure}
\centering	
\begin{subfigure}{0.35\linewidth}\centering
\includegraphics[width=0.9\linewidth]{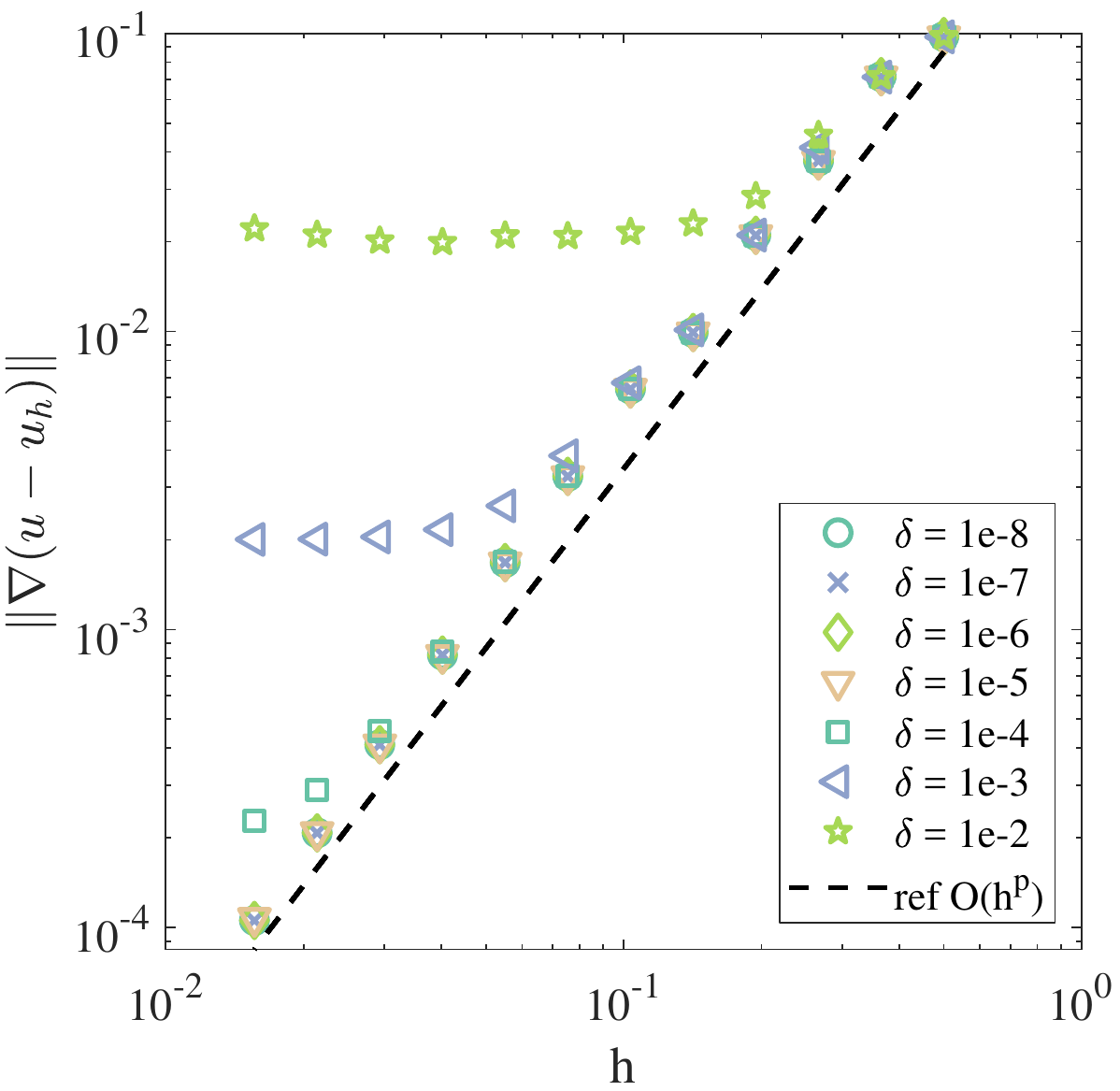}
\subcaption{Error in $H^1$-seminorm}
\end{subfigure}
\begin{subfigure}{0.35\linewidth}\centering
\includegraphics[width=0.9\linewidth]{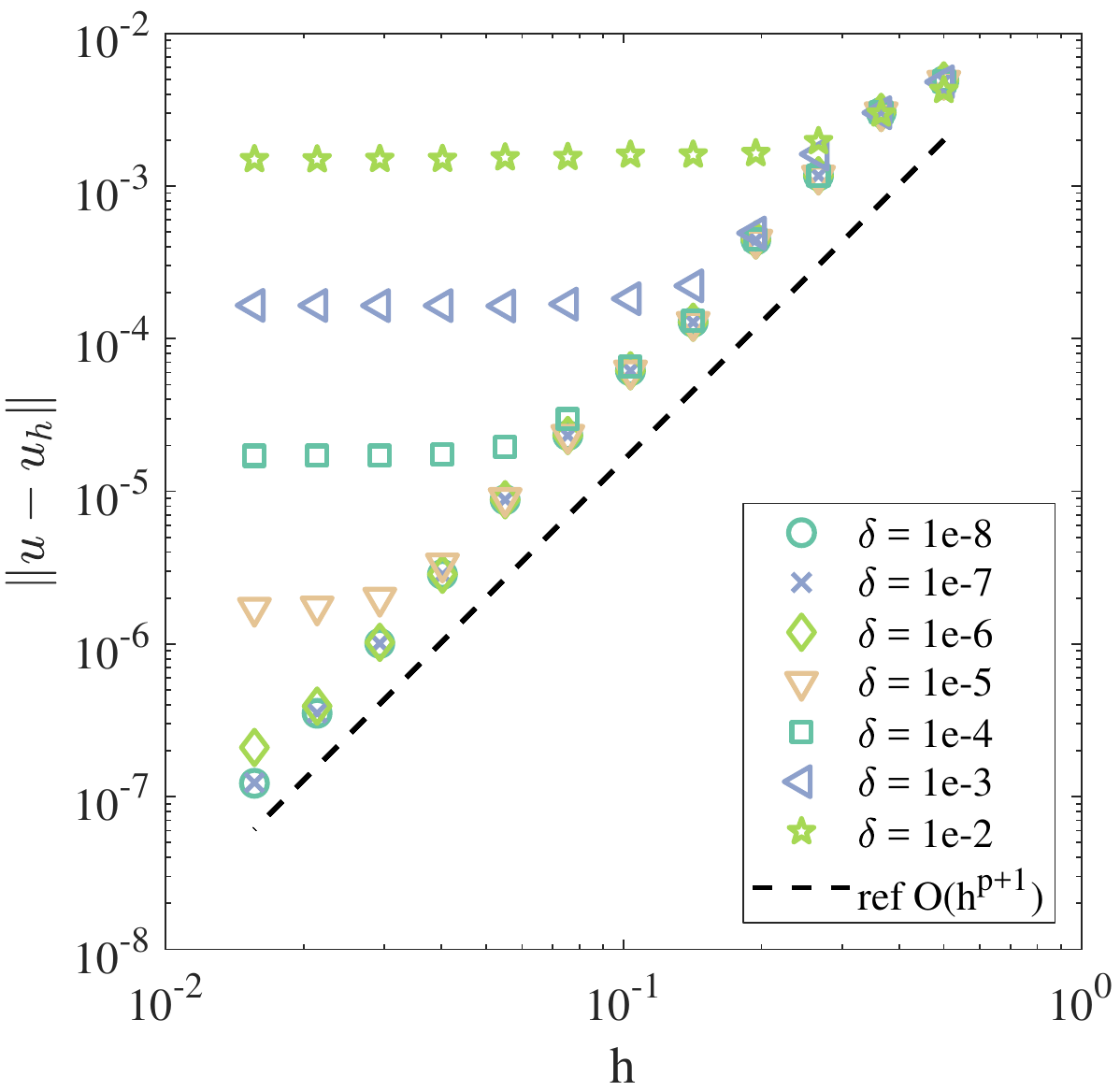}
\caption{Error in $L^2$-norm}
\end{subfigure}
\caption{\emph{Convergence for a fixed gap size.} In most practical situations the gap size $\delta$ is not something we can choose but is rather given by a provided CAD surface. Here, we consider convergence in the surface model problem for a number of different fixed gap sizes $\delta$ when discretizing using full regularity B-spline basis functions of degree $p=2$. We note optimal order convergence until the error eventually levels out due to the geometric error.}
\label{fig:fixed-gap-convergence}
\end{figure}

\begin{figure}
\centering	
\begin{subfigure}{0.3\linewidth}\centering
\includegraphics[width=0.95\linewidth]{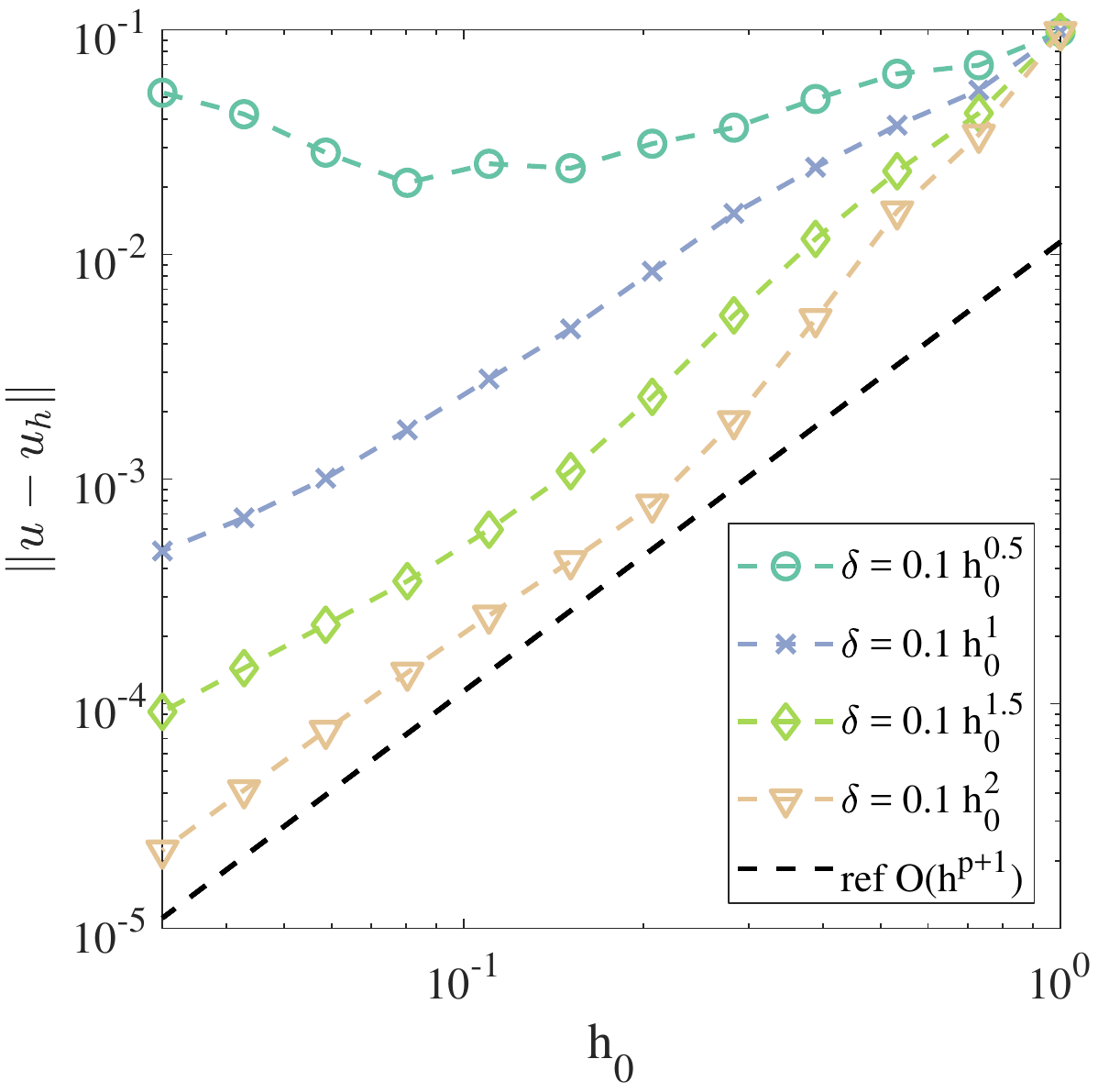}
\caption{$p=1$}
\label{fig:torus_L2_q1_alpha}
\end{subfigure}
\begin{subfigure}{0.3\linewidth}\centering
\includegraphics[width=0.95\linewidth]{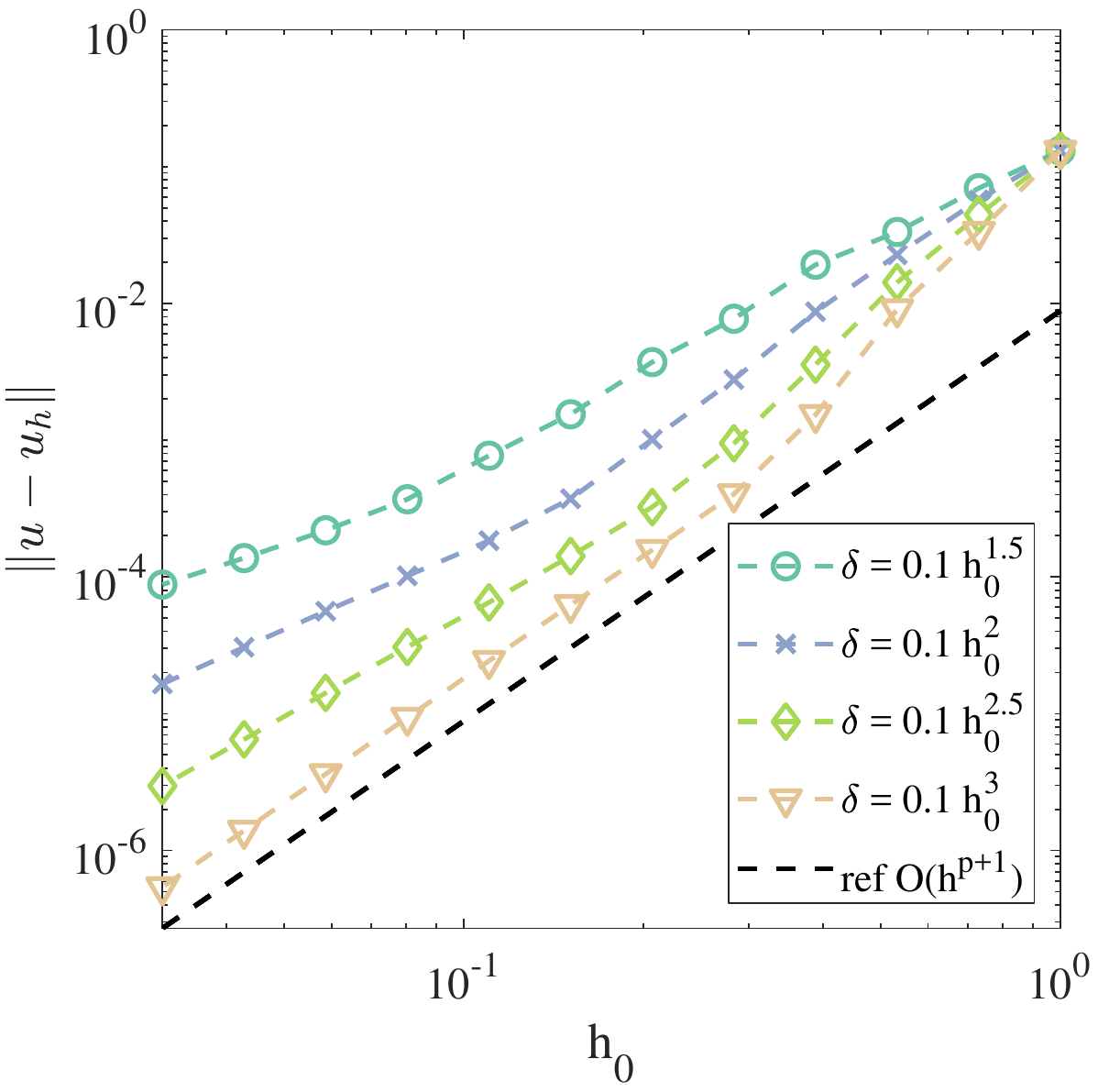}
\caption{$p=2$}
\label{fig:torus_L2_q2_alpha}
\end{subfigure}
\begin{subfigure}{0.3\linewidth}\centering
\includegraphics[width=0.95\linewidth]{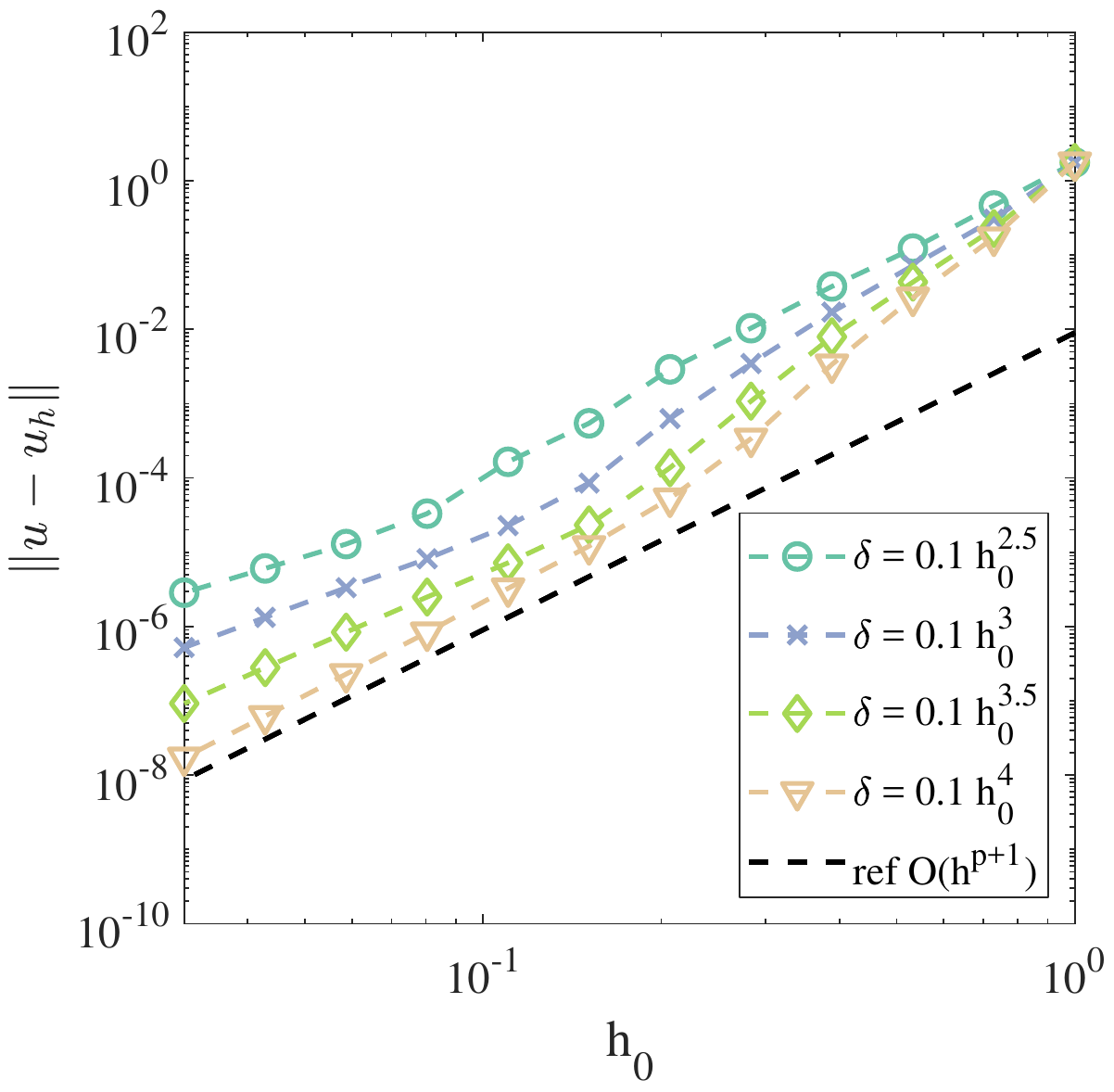}
\caption{$p=3$}
\label{fig:torus_L2_q3_alpha}
\end{subfigure}
\\[0.5em]
\begin{subfigure}{0.3\linewidth}\centering
\includegraphics[width=0.95\linewidth]{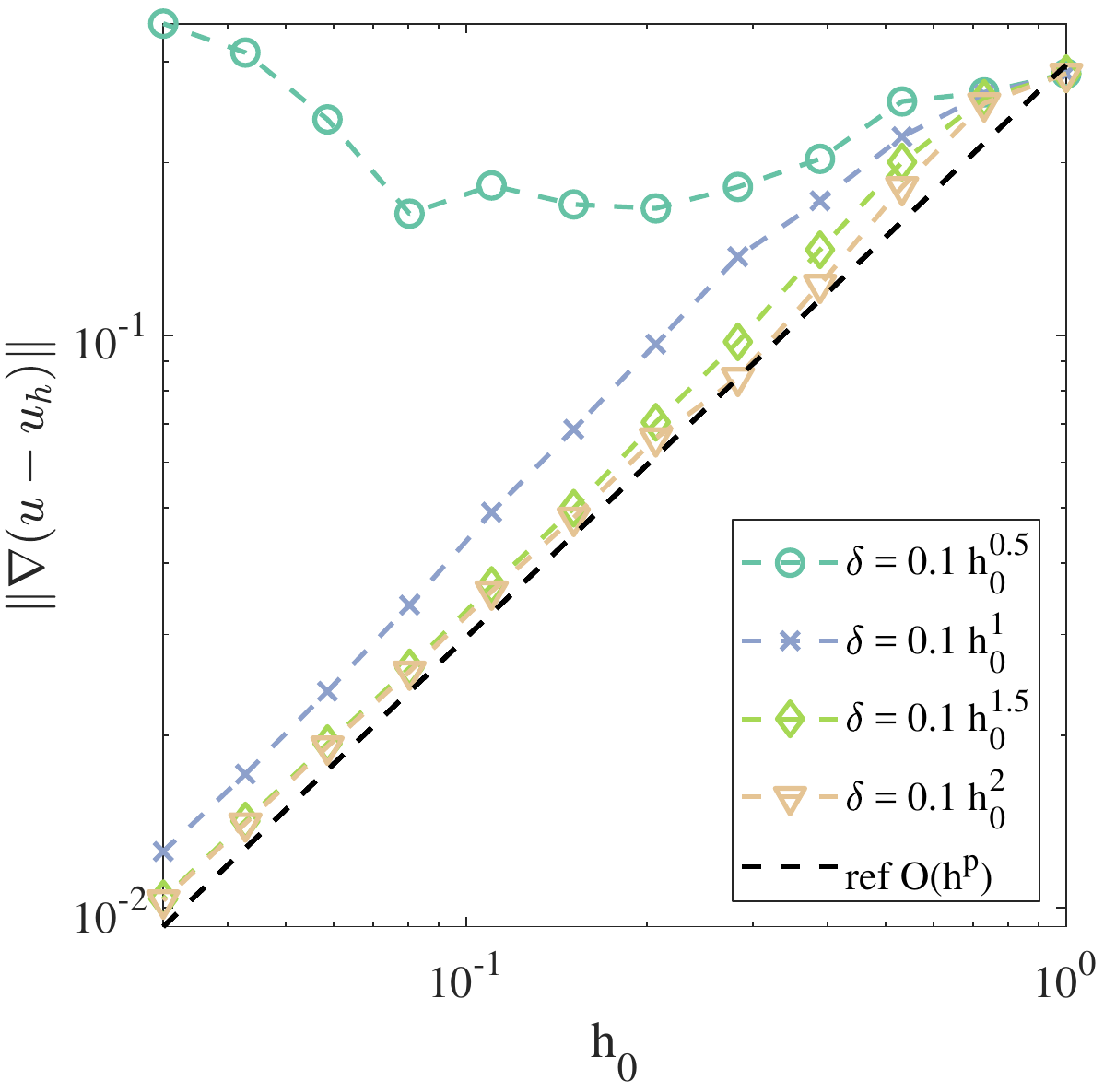}
\subcaption{$p=1$}
\label{fig:torus_H1_q1_alpha}
\end{subfigure}
\begin{subfigure}{0.3\linewidth}\centering
\includegraphics[width=0.95\linewidth]{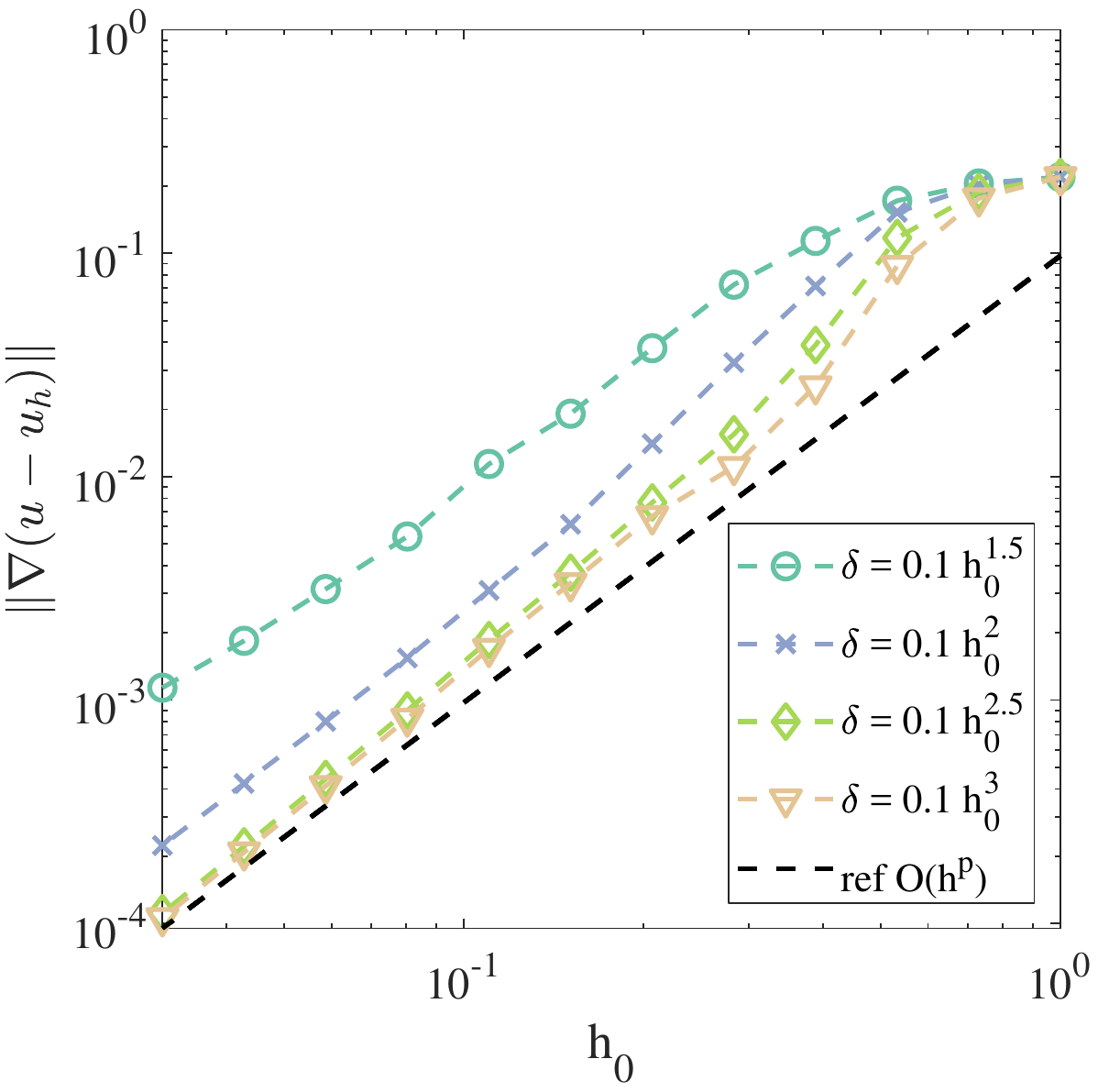}
\subcaption{$p=2$}
\label{fig:torus_H1_q2_alpha}
\end{subfigure}
\begin{subfigure}{0.3\linewidth}\centering
\includegraphics[width=0.95\linewidth]{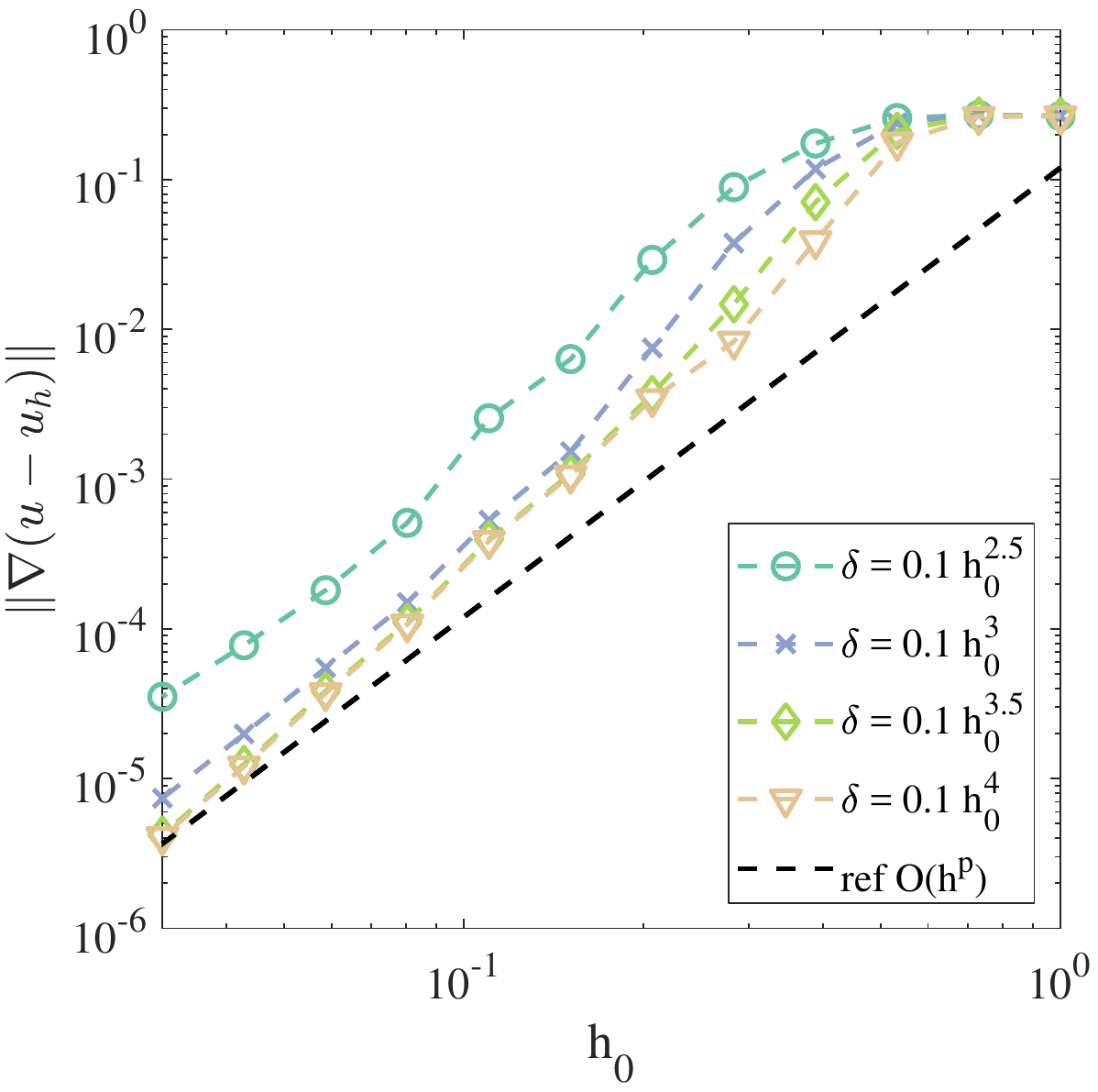}
\caption{$p=3$}
\label{fig:torus_H1_q3_alpha}
\end{subfigure}
\caption{\emph{Convergence with gap scaling.}
In this numerical study, we investigate how the gap size $\delta$ must scale with $h$ to maintain optimal order convergence. We scale the gap size for the surface model problem as $\delta = 0.1h_0^{s}$ for various values of $s$.
Here, $h_0$ is the mesh size $h$ normalized by the largest mesh size to have the same initial gap size independently of $s$.
We discretize using full regularity B-spline basis functions of degree $p$.
}
\label{fig:gap-scaling-convergence}
\end{figure}

\paragraph{Hybrid Variable Studies.}
Next, we study the behavior of the hybrid variable stabilization. To facilitate better visualizations of the numerical solution, including the hybrid variable, we construct a model problem in the two-dimensional plane by taking the unit square, cutting out a disc, and shifting this disc a distance $\delta$ in the plane causing a gap. While this model geometry is entirely defined in the two-dimensional plane the hybrid variable is still defined on a three-dimensional mesh covering the gap, so for visual clarity, we plot the hybrid variable solution along its intersection with the plane.

%\begin{itemize}
%\item 
Intuitively, the desired effect of the hybrid variable stabilization is to make the hybrid variable solution constant across the gap while being sufficiently weak not to affect the solution along either side of the gap. In Figure~\ref{fig:blockstab-weak-good-strong} we vary the strength of this stabilization in one gap situation and plot the hybrid variable solution. We note that a too weak stabilization causes the hybrid variable solution to vary significantly over the gap, while an apt stabilization as desired keeps the solution constant across the gap. On the other hand, a too strong stabilization induces looking due to the curved interfaces, which deteriorates the solution also along the gap. This illustrates the importance of choosing an accurate scaling of the hybrid variable stabilization.

%\item
In Figure~\ref{fig:increasing-gap} we look at how the patch error is qualitatively affected by the gap size $\delta$. Looking at the outer patch, whose location is constant with respect to the gap size, we as expected see that the error increases with the gap size. The hybrid variable stabilization seems to do its job since the hybrid variable solution keeps approximately constant across the gap for all gap sizes. Due to the way we extract the hybrid mesh in our implementation, there are, in the case of the largest gap, some elements missing in the region covering the gap. This, however, seems to have little influence on the hybrid variable solution, which is likely thanks to the extended support of the B-spline basis functions.

%\end{itemize}

\begin{figure}
\centering	
\begin{subfigure}[t]{0.3\linewidth}\centering
\includegraphics[width=0.9\linewidth]{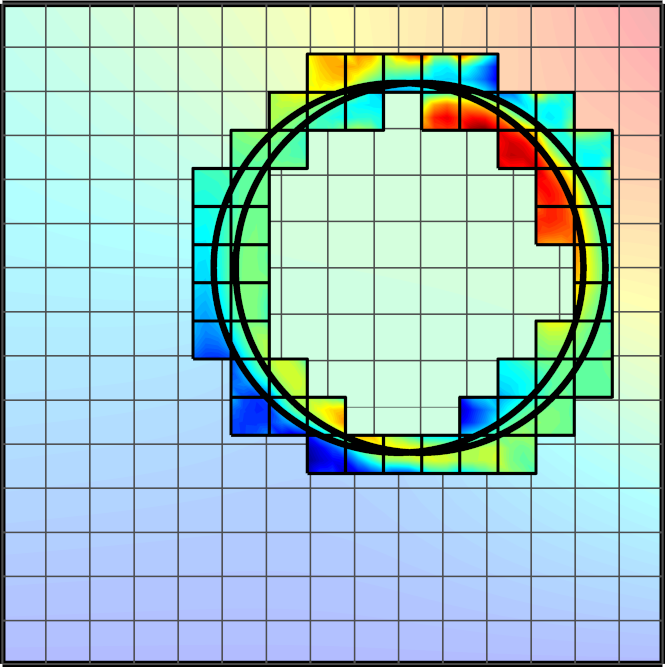}
\subcaption{Too weak stabilization}
\label{fig:square:blocksol:low}
\end{subfigure}
\begin{subfigure}[t]{0.3\linewidth}\centering
\includegraphics[width=0.9\linewidth]{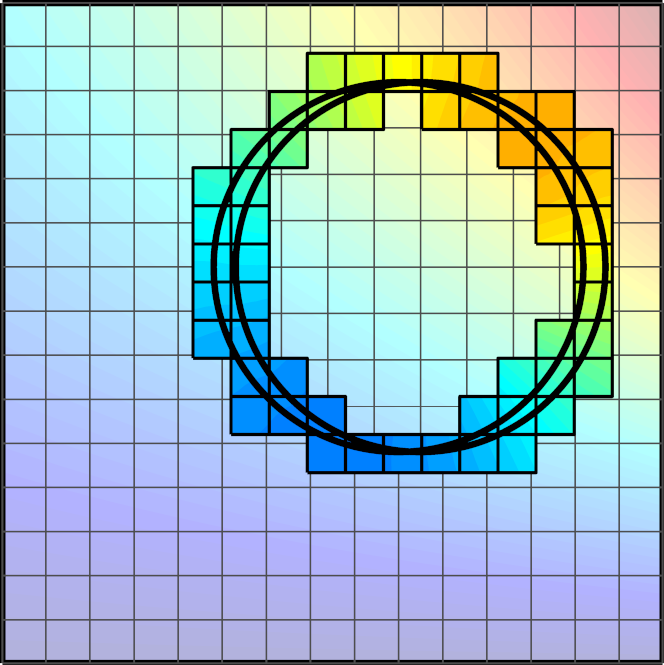}
\subcaption{Suitable stabilization}
\label{fig:square:blocksol:mid}
\end{subfigure}
\begin{subfigure}[t]{0.3\linewidth}\centering
\includegraphics[width=0.9\linewidth]{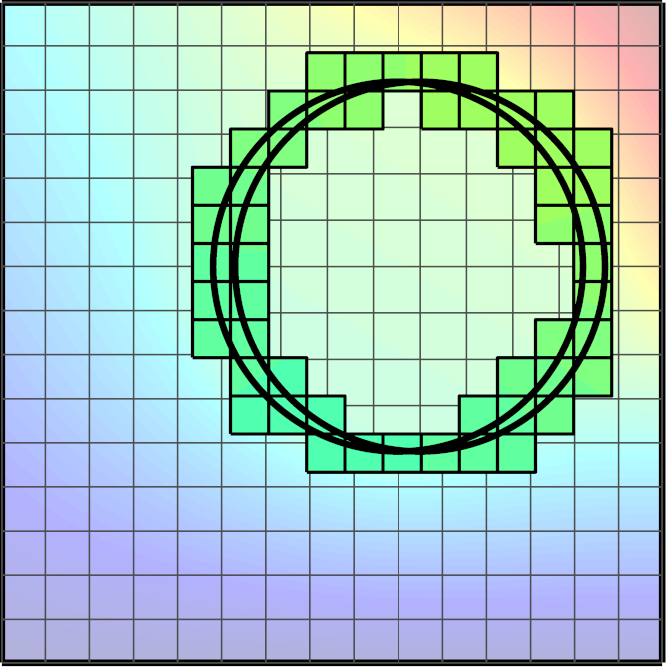}
\subcaption{Too strong stabilization}
\label{fig:square:blocksol:high}
\end{subfigure}
\caption{\emph{Effect of hybrid variable stabilization.} Numerical solution of the hybrid variable using a too weak, a suitable, respectively a too strong hybrid variable stabilization. For reference, the (faded) numerical solution in the patches is also presented.
In the case of a too weak stabilization as seen in (a) the hybrid variable varies substantially across the gap, whereas a suitable stabilization as in (b) yields the desired behavior where the hybrid variable is almost constant across the gap. Using a too strong stabilization, as in (c), comes with the risk of locking in the hybrid variable as the coupling between the normal and tangential components, induced by the curvature of the interface, may become dominant.
}
\label{fig:blockstab-weak-good-strong}
\end{figure}

\begin{figure}
\centering
\begin{subfigure}[t]{0.3\linewidth}\centering
\includegraphics[width=0.9\linewidth]{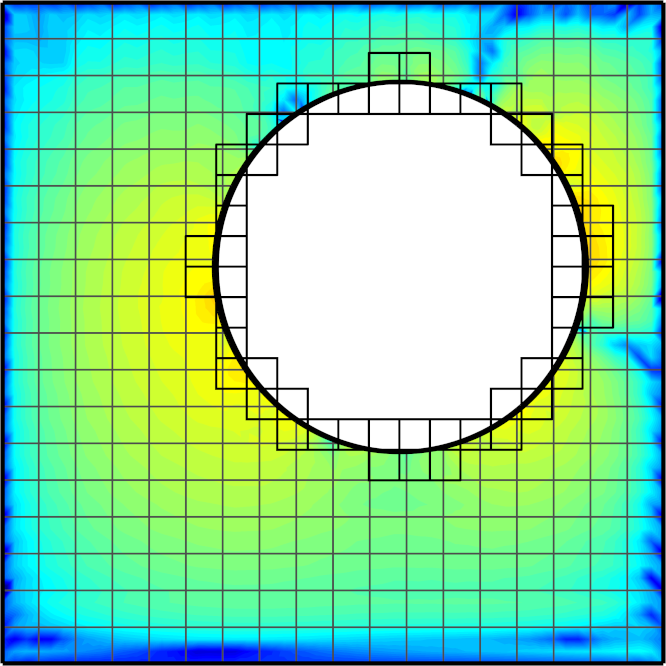}
\subcaption{$\log|u-u_h|$, $\delta h^{-1}=0.05$}
\label{fig:square:abserr:005}
\end{subfigure}
\begin{subfigure}[t]{0.3\linewidth}\centering
\includegraphics[width=0.9\linewidth]{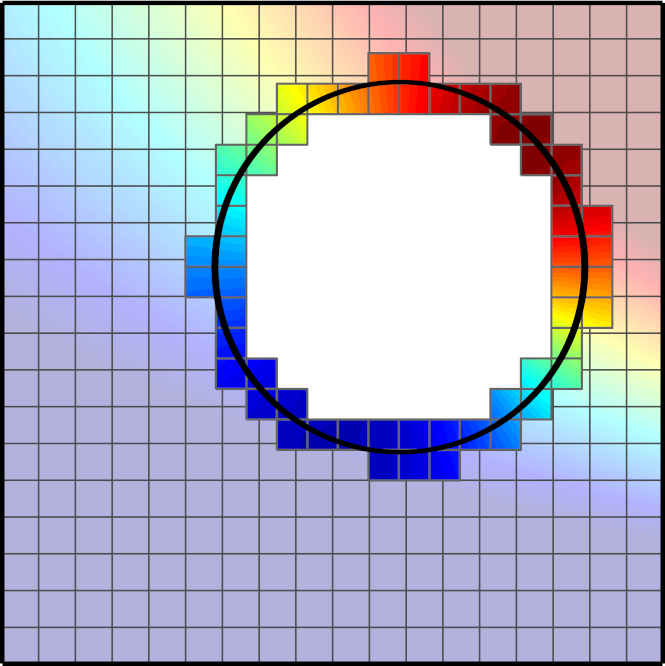}
\subcaption{$u_{h,0}$, $\delta h^{-1}=0.05$}
%\label{fig:square:blocksol:low}
\end{subfigure}
\\[0.5em]
\begin{subfigure}[t]{0.3\linewidth}\centering
\includegraphics[width=0.9\linewidth]{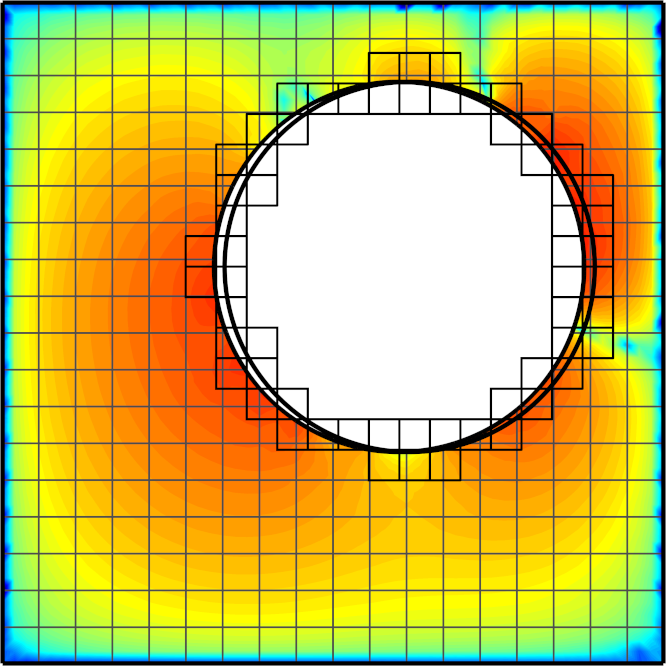}
\subcaption{$\log|u-u_h|$, $\delta h^{-1}=0.2$}
\label{fig:square:abserr:02}
\end{subfigure}
\begin{subfigure}[t]{0.3\linewidth}\centering
\includegraphics[width=0.9\linewidth]{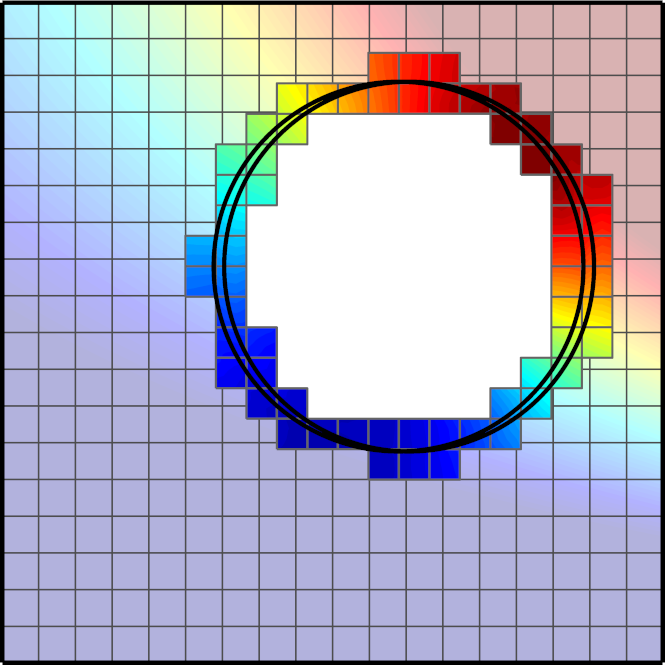}
\subcaption{$u_{h,0}$, $\delta h^{-1}=0.2$}
%\label{fig:square:blocksol:mid}
\end{subfigure}
\\[0.5em]
\begin{subfigure}[t]{0.3\linewidth}\centering
\includegraphics[width=0.9\linewidth]{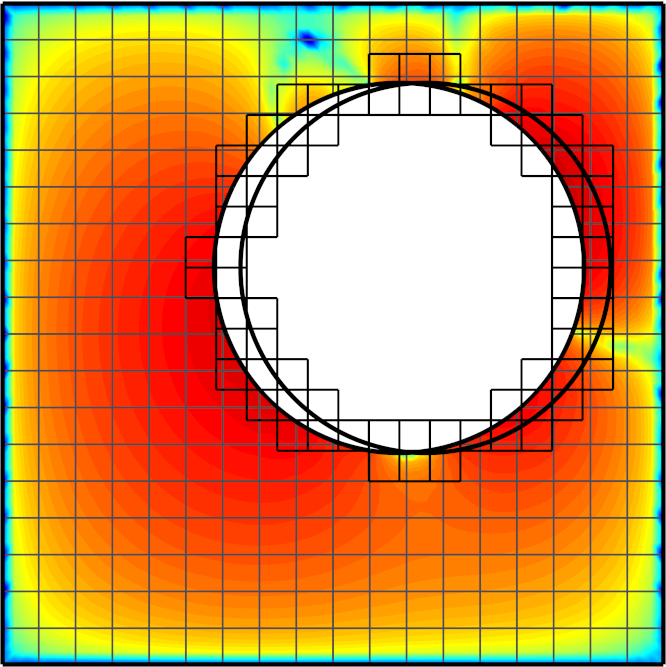}
\subcaption{$\log|u-u_h|$, $\delta h^{-1}=0.5$}
\label{fig:square:abserr:05}
\end{subfigure}
\begin{subfigure}[t]{0.3\linewidth}\centering
\includegraphics[width=0.9\linewidth]{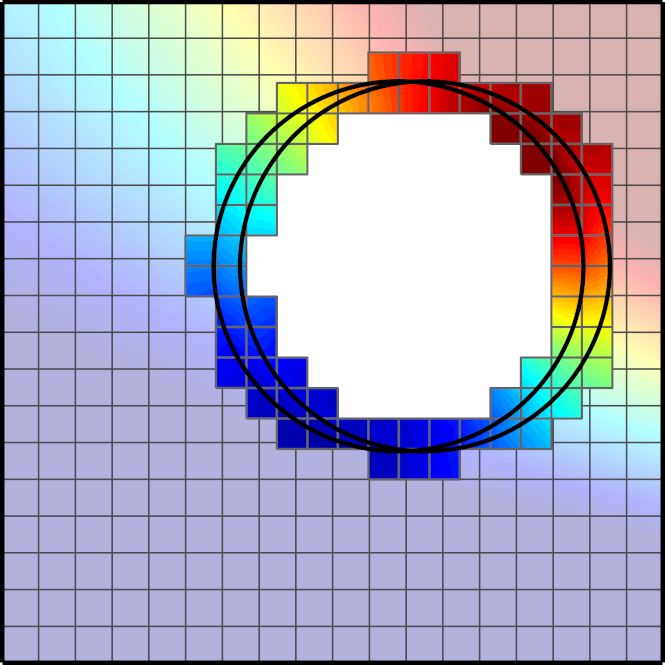}
\subcaption{$u_{h,0}$, $\delta h^{-1}=0.5$}
%\label{fig:square:blocksol:high}
\end{subfigure}
\\[0.5em]
\begin{subfigure}[t]{0.3\linewidth}\centering
\includegraphics[width=0.9\linewidth]{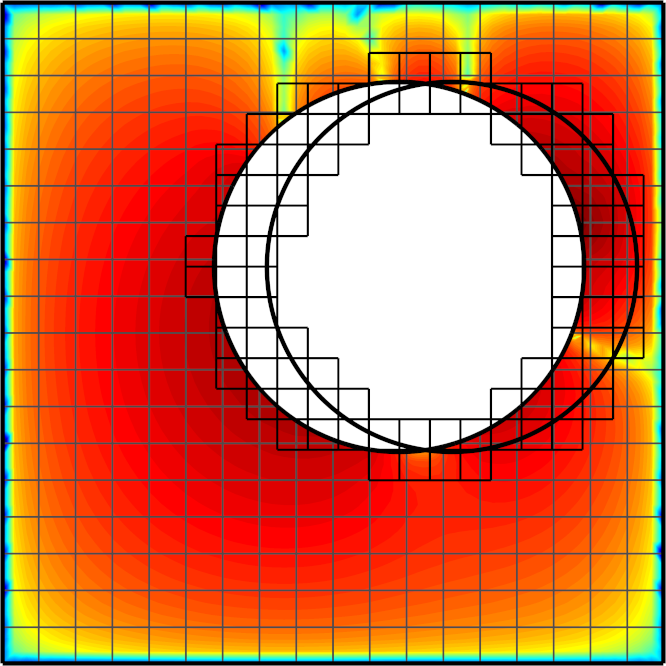}
\subcaption{$\log|u-u_h|$, $\delta h^{-1}=1$}
\label{fig:square:abserr:1}
\end{subfigure}
\begin{subfigure}[t]{0.3\linewidth}\centering
\includegraphics[width=0.9\linewidth]{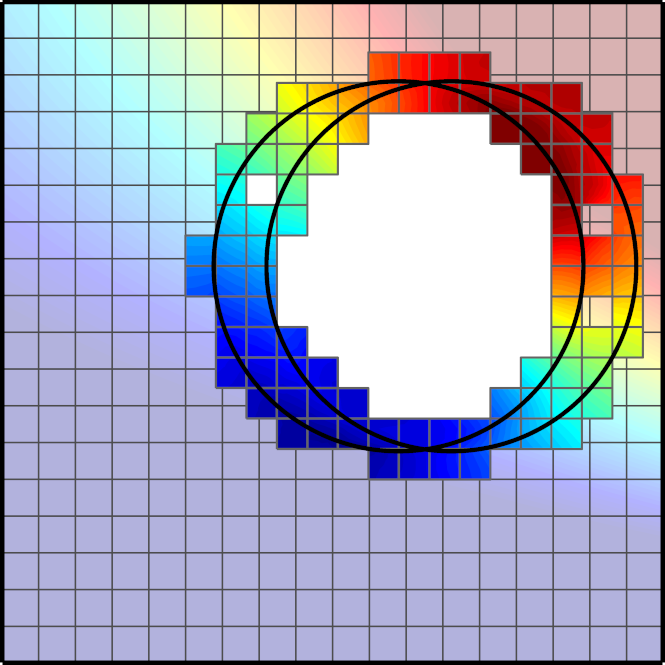}
\subcaption{$u_{h,0}$, $\delta h^{-1}=1$}
%\label{fig:square:blocksol:high}
\end{subfigure}
\caption{\emph{Effect of gap increase.} Sequence of two-patch domain with a gap, where the gap size $\delta$ is gradually increased. \emph{Left:} The absolute error for the numerical solution in the outer patch. \emph{Right:} The numerical solution of the hybrid variable on top of the (faded) numerical solution in the outer patch.}  
\label{fig:increasing-gap}
\end{figure}

\paragraph{Surface CAD Example.}
As a final example, we consider the surface CAD geometry of a tube intersection presented in Figure~\ref{fig:cad-surface}. This geometry was created using the surface CAD modeling software Rhino \cite{rhino} and exported in IGES format. In the CAD each of the three tubes is described as a parametric mapping from $[0,1]^2$ onto a tube surface along with trim curves in $[0,1]^2$ defining parts of the tube surface to remove, which in this case is given by the tube intersections.  Note that this surface CAD description does not include any connectivity information. To emphasize the gaps along the interfaces, we manually shifted the tube pieces for the final geometry.

In Figure~\ref{fig:cad-example} we present a numerical solution to a Dirichlet problem without load, where we impose different constant values on each of the four tube ends. Looking at the surface solution we note that it seems to flow nicely over the gaps. The hybrid variable stabilization seems to do its job as the hybrid variable solution does not appear to vary across the gap. Due to the exaggerated gap size a quite large mesh size is used for the hybrid variable, and, while seemingly not problematic in this example, we realize that the hybrid variable actually has some unwanted coupling between the various interfaces. This is a potential drawback of the simple implementation of the method where we define the hybrid variable for all interfaces using one continuous field. On the other hand, in practice this is not an issue for a problem with a more reasonable gap size and the simple and robust implementation are strengths of the method.

\begin{figure}
\centering	
\includegraphics[width=0.35\linewidth]{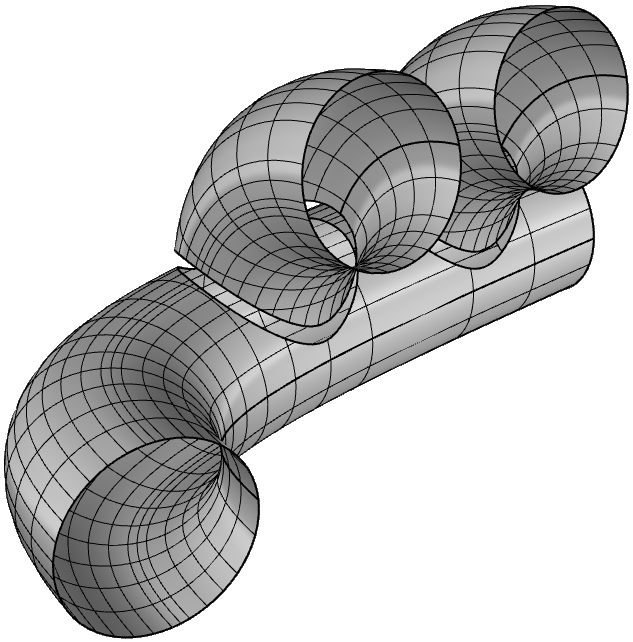}
\caption{\emph{CAD surface with gaps.}
The three tube pieces are patchwise described, where each piece is given by a parametric mapping from $[0,1]^2\subset\IR^2$ onto the tube surface. The interfaces where the tubes intersect are described using trim curves in each patch, defining parts of the surface to remove. Since the trim curves only give approximations to the true interfaces, the CAD surface includes small gaps over the interfaces.
In this example, we have exaggerated the gaps by translating the upper tubes vertically.}
\label{fig:cad-surface}
\end{figure}

\begin{figure}
\centering	
\begin{subfigure}[t]{0.45\linewidth}\centering
\includegraphics[width=0.86\linewidth]{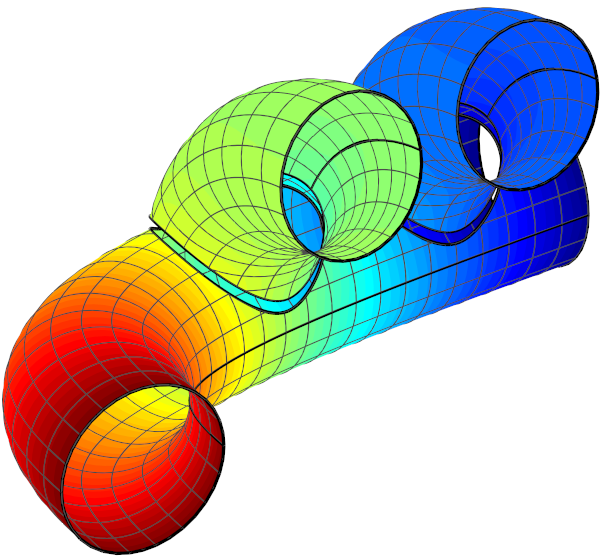}
\subcaption{Surface solution}
\label{fig:cad-uh}
\end{subfigure}
\begin{subfigure}[t]{0.45\linewidth}\centering
\includegraphics[width=0.9\linewidth]{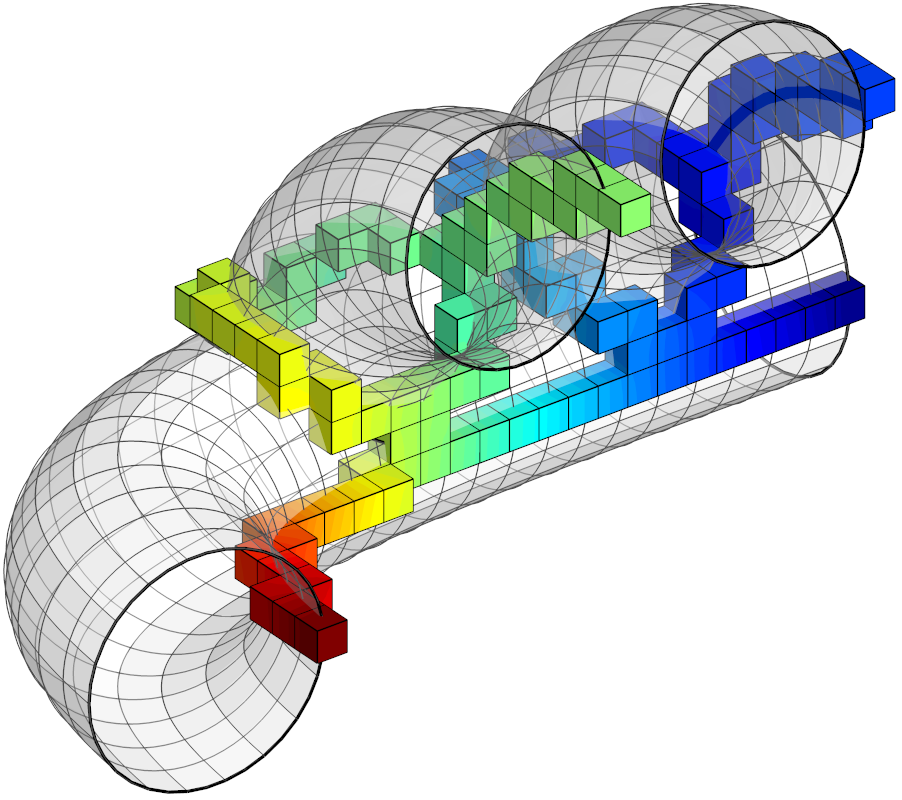}
\subcaption{Hybrid variable solution}
\label{fig:cad-uh0}
\end{subfigure}
\caption{\emph{Solution on CAD surface.}
In (a) we see the numerical solution to a Dirichlet problem with zero load where a constant value is imposed along each tube ending, with a different value for each ending. The solution flows nicely over the interfaces, both when coupling different patches over the gaps and when coupling a patch to itself. 
As seen in (b), both types of couplings are handled by the hybrid variable.
}
\label{fig:cad-example}
\end{figure}

\section{Summary}
In this contribution we have utilized weak enforcement techniques for interface problems, based on a hybridized Nitsche's formulation, to robustly couple solutions on surface CAD geometries with gaps/overlaps at the interfaces. Our approach has several benefits:
\begin{itemize}
\item \emph{Convenient and robust implementation.} 
The use of a hybridized Nitsche formulation for the coupling makes for a very convenient and robust implementation.
The convenience lies in that surface patches only directly couple to the hybrid variable, so assembly is naturally done patchwise, and that the hybrid variable is defined on a structured (potentially octree) grid in an embedding Euclidean space, which makes operations such as identifying in what element a point is located, easy and efficient.
In contrast to other multipatch methods based on Nitsche formulations, the hybrid formulation limits the need for computing inverses of the NURBS mappings in adjacent patches, which increases robustness.
Further, the use of CutFEM techniques in the patches makes for very flexible and convenient discretization choices, since the computational meshes are not required to conform to the trimmed reference domains.

\item \emph{Ease of application.} Since surface CAD models do not always include good connectivity information, i.e., the topological relationship of how the patch boundaries are coupled to each other, it significantly simplifies the application of the method that this information is not needed, but is rather implicit through the hybrid variable. Actually, the method is agnostic to both the number of surface patches joining at an interface, and whether the interface couples a patch to itself or to another patch.

\item \emph{Mathematical and numerical analysis.} Our preliminary mathematical analysis shows that we can devise an optimal order method using this technique, how the error is affected by the gap size, and what a suitable scaling of the hybrid variable stabilization is.
Our numerical results give further verification of the performance and insights into the behavior of the method.
\end{itemize}
A limitation in our current extraction of the hybrid variable mesh $\mcT_{h,0}$ is that we essentially assume the gap size $\delta$ to be smaller than the mesh size $h$, since there, from an accuracy perspective, is little motivation to use smaller $h$. However, it would be interesting to make the method robust also when $h \ll \delta$. This would require a technique for estimating the gap sizes along with an approach for padding $\mcT_{h,0}$ such that it always is simply connected across the gap.

\bigskip
\paragraph{Acknowledgement.} This research was supported in part by the Swedish Research
Council Grants Nos.\  2017-03911,  2021-04925,  and the Swedish
Research Programme Essence.

\bibliographystyle{habbrv}
\footnotesize{
\bibliography{ref}
}

%\vfill
\bigskip
\bigskip
\noindent
\footnotesize {\bf Authors' addresses:}

\smallskip
\noindent
Tobias Jonsson  \quad \hfill \addressumushort\\
{\tt tobias.jonsson@umu.se}

\smallskip
\noindent
Mats G. Larson,  \quad \hfill \addressumushort\\
{\tt mats.larson@umu.se}

\smallskip
\noindent
Karl Larsson, \quad \hfill \addressumushort\\
{\tt karl.larsson@umu.se}

\end{document}